\newtheorem{prop}{Proposition}
\newtheorem{cor}{Corollary}
\newtheorem{lemma}{Lemma}
\newtheorem{thm}{Theorem}
\newcommand{\Cl}{\mathcal{C}l}
\newcommand{\calO}{\mathcal O}
\newcommand{\calP}{\mathcal P}
\newcommand{\Z}{\mathbb Z}
\newcommand{\Q}{\mathbb Q}
\newcommand{\N}{\mathbb N}
\newcommand{\p}{\frak p}
\newcommand{\q}{\frak q}
\newcommand{\leg}{\overwithdelims ()}
\begin{document}

\title{Non-unique factorization and principalization in number fields}
\date{\today}
\author{Kimball Martin}
\email{kmartin@math.ou.edu}
\address{Department of Mathematics, University of Oklahoma, Norman, OK 73011}

\subjclass[2000]{11R27, 11R29}
\keywords{Non-unique factorization, principalization, class group}

\begin{abstract}
Following what is basically Kummer's relatively neglected approach to non-unique factorization,
we determine the structure of the irreducible factorizations of an element $n$ in the ring of integers of a 
number field $K$.  Consequently, we give a combinatorial expression for the number of irreducible factorizations of $n$
in the ring.  When $K$ is quadratic, we show in certain cases how quadratic forms can be used to explicitly produce all irreducible
factorizations of $n$.
\end{abstract}

\maketitle

\section{Introduction}

One of the basic issues in algebraic number theory is the fact that for a number field $K$, and an integer $n \in \calO_K$, 
an irreducible factorization of $n$ may not be unique (up to ordering and units).  Historically, there were three major attempts to deal with this:
Gauss's theory of binary quadratic forms for quadratic fields $K$, Kummer's theory of ideal numbers, and Dedekind's theory of ideals.
Kummer's approach was largely abandoned in favor of Dedekind's powerful theory.  Ideals lead naturally to the general notion of the class group
$\Cl_K$, which provides a way of measuring the failure of unique factorization in $\calO_K$.   In fact there are precise ways
in which we can characterize the failure of unique factorization of $\calO_K$ in terms of the class group. 
For instance, Carlitz \cite{Carlitz} showed that
every irreducible factorization of $n$ over $\calO_K$
has the same length for all $n \in \calO_K$ if and only if the class number $h_K$ of $K$ is $1$ or $2$. 

Nevertheless, the precise way in which the class group determines the irreducible factorizations of $n$ is still not completely understood.  
Much of the
research on non-unique factorizations to date has been devoted to the study of lengths of factorizations, motivated by \cite{Carlitz}, 
determining which elements have unique factorization, and related questions
about asymptotic behavior.  For an introduction to this subject, see  \cite{Nark}, \cite{HKsurvey} or \cite{GHKsurvey}.  See
\cite{GHK} for a more comprehensive reference.

In this note we will give an explicit description of the structure of the irreducible factorizations of $n$ in $\calO_K$ in terms of the prime ideal
factorization of $(n)$ which depends only upon the class group $\Cl_K$.   Thus we have a very precise description of how the class group 
measures the failure of unique factorization in $\calO_K$.  For example, Carlitz's result is an immediate corollary.  To do this, we use the idea
of principalization, which is essentially Kummer's theory of ideal numbers, framed in the language of Dedekind's ideals.
Specifically, we pass to an extension $L$ which principalizes $K$, i.e., every ideal of $K$ becomes principal in
$L$.  
This means that all irreducible factorizations of $n \in \calO_K$ come from different groupings of a single factorization in $\calO_L$.  

To understand the irreducible factorizations of $n$ in a more quantitative way, one natural question is, what is the number $\eta_K(n)$ of 
(nonassociate) irreducible factorizations of $n$ in $\calO_K$?  Of course if $h_K=1$, then $\eta_K(n) = 1$ for all $n \in \calO_K$.  
If $h_K=2$, then Chapman, Herr and Rooney \cite{CHR} established a formula for $\eta_K(n)$ in terms of the
prime ideal factorization of $(n)$ in $\calO_K$.  However, it is a rather
complicated recursive formula on the number of prime ideal factors of $(n)$.  (In fact the work \cite{CHR} treats the more general case of Krull
domains.)   For work on determining for which $n$ satisfy $\eta_K(n)=1$ see \cite{Nark}; for asymptotic results on $\eta_K(n)$ see \cite{HKacta}, \cite{HKark} or \cite{GHK}.

We obtain, for an arbitrary number field $K$ and $n \in \calO_K$, a relatively simple  combinatorial 
expression for $\eta_K(n)$, which appears to be about as simple as one could hope for.
This formula is particularly simple in the case $h_K=2$, and we begin in Section \ref{sec2}
by explaining how to treat the standard class number $2$ example of $K=\Q(\sqrt{-5})$.  The expression we get for $\eta_K(n)$ is valid for 
any $K$ with class number $2$, and is considerably nicer than the formula in \cite{CHR}.
One can in fact treat the case of $K=\Q(\sqrt{-5})$ with quadratic forms, and this is what we do in Section \ref{sec2}.  
This yields, more than just the structure of the factorizations of $n$ in $\calO_K$, the explicit irreducible factorizations
of $n$ in terms of the representations of the primes dividing $n$ (say if $n \in \Z$) by certain quadratic forms.

In Section \ref{sec3}, we treat the case of an arbitrary number field $K$, and discuss some simple consequences.   We remark that these
results should in fact apply to more general Krull domains than $\calO_K$ by the theory of type monoids \cite[Section 3.5]{GHK}, but this is
not our focus here.
In Section \ref{sec4}, we revisit the approach using quadratic forms presented in Section \ref{sec2} for quadratic fields $K$.

We would like to thank Daniel Katz for helpful comments, and suggesting the use of formal power series.
We also appreciate the comments provided by the referee after a careful reading of the manuscript.

\section{An example: class number $2$} \label{sec2}

Let $K=\Q(\sqrt{-5})$.  This field has discriminant $\Delta = -20$ and class group $\Cl_K\simeq \Z/2\Z$.  
Denote by $\frak C_1$ the set of principal ideals in $\calO_K$ and $\frak C_2$ the set of nonprinicpal ideals of $\calO_K$.
Now the reduced (positive binary quadratic) forms of discriminant $\Delta$ are $Q_1(x,y) = x^2+5y^2$ and $Q_2(x,y) = 2x^2+2xy+3y^2$.

Let $\calP_0$ denote the primes $p \in \N$ which are not represented by $Q_1$ or $Q_2$ and $\calP_i$ denote the primes $p \in \N$
which are represented by $Q_i$ for $i=1, 2$.  
Then $\calP_0$ is the set of inert primes in $K/\Q$, $\calP_1$ is the set of primes $p$
such that the ideal $p\calO_K$ factors into two principal ideals in $\calO_K$, and $\calP_2$ is the set of primes $p$ such that $p\calO_K$
factors into two nonprincipal ideals of $\calO_K$.  

Set 
\begin{align*} \calP_i^{ram} &= \{ p \in \calP_i : p \text{ is ramified in }K \},  \text{ and} \\
\calP_i^{unr} &= \{ p \in \calP_i : p \text{ is unramified in }K\}.
\end{align*} 
Explicitly, we have $\calP_0 = \{ p :  p \equiv 11, 13, 17, 19 \mod 20\}$, $\calP_1^{ram}
= \{ 5 \}$, $\calP_1^{unr} = \{ p: p \equiv 1, 9 \mod 20 \}$, $\calP_2^{ram} = \{2 \}$ and $\calP_2^{unr} = \{ p : p \equiv 3, 7 \mod 20 \}$.

If $p \in \calP_0 \cup \calP_1$ then any prime ideal $\p$ of $\calO_K$ lying above $p$ is in $\frak C_1$, and if $q \in \calP_2$, then any 
prime ideal of $\calO_K$ lying above $q$ is in $\frak C_2$.  Specifically, if $q = 2\in \calP_2^{ram}$, then $q\calO_K = \frak r^2$ where
$\frak r$ is the prime ideal $(2,1+\sqrt{-5})$ of $\calO_K$, and if 
$q  \in \calP_2^{unr}$ then $q = \q \bar \q$ where $\q$ and $\bar \q$ are distinct prime ideals of $\calO_K$.  Here $\bar \q$ 
denotes the conjugate ideal of $\q$ in $K/\Q$.

Now let $n \in \calO_K$ be a nonzero nonunit, and write the prime ideal factorization of $(n)$ as 
\[ (n) = \p^{d_1}_1 \cdots \p^{d_r}_r \q^{e_{11}}_1 \bar \q_1^{e_{12}} \cdots \q^{e_{1s}}_s \bar \q_s^{e_{2s}} \frak r^f, \]
where each $\p_i \in \frak C_1$, $\q_j \in \frak C_2$ with conjugate $\bar \q_j$, and 
the $\p_i$'s, $\q_j$'s, $\bar \q_j$'s and $\frak r$ are all distinct.  
Since each $\p_i = (\pi_i)$ for some irreducible $\pi_i$ of $\calO_K$, any irreducible factorization
of $n$ must contain (up to units) $\pi_1^{d_1} \cdots \pi_r^{d_r}$.  
Thus it suffices to consider irreducible factorizations of $n'=n/(\pi_1^{d_1} \cdots \pi_r^{d_r})$.

Let $q_j$ be the prime in $\N$ such that $\q_j$ lies above $q_j$.  Since $\q_j$ is nonprincipal, we must have that $q_j \in \calP_2$, i.e., $q_j$
is represented by $Q_2$.  Note that we can factor the quadratic form into linear factors
\begin{equation} \label{223fact}
Q_2(x,y) = (\sqrt 2x + \frac{\sqrt 2 + \sqrt{-10}}2 y) (\sqrt 2x + \frac{\sqrt 2 - \sqrt{-10}}2 y)
\end{equation}
over the field $L=K(\sqrt{2})$.  Hence, while $q_j$ is irreducible over $\calO_K$ (otherwise the prime ideal factors of $q_j\calO_K$ would be
principal), the fact that $q_j = Q_2(x,y)$ for some $x,y$ gives us a factorization $q_j= \alpha_{1j} \alpha_{2j}$ in $L$ where
$\alpha_{1j} = \sqrt 2x + \frac{\sqrt 2 + \sqrt{-10}}2 y$ and $\alpha_{2j} = \sqrt 2x + \frac{\sqrt 2 - \sqrt{-10}}2 y$.  
Since $\sqrt{2}, \frac{\sqrt 2 \pm \sqrt{-10}}2 \in \calO_L$, we have $\alpha_{ij} \in \calO_L$ (in fact irreducible).  

Observing that $\alpha_{1j}$ and $\alpha_{2j}$ are conjugate with respect to the nontrivial element of $\mathrm{Gal}(K/\Q)$,
the ideals $(\alpha_{1j}) \cap \calO_K$
and $(\alpha_{2j}) \cap \calO_K$ must be conjugate ideals of $\calO_K$ which divide $q_j$, and hence in some order equal 
$\q_j$ and $\bar \q_j$.  Thus, up to a possible relabeling, we can write
$\q_j\calO_L = (\alpha_{1j})$ and $\bar \q_j \calO_L = (\alpha_{2j})$.  Similarly $\frak r \calO_L = (\sqrt 2)$.  To simplify notation below,
we set $\alpha_{00} = \sqrt 2$ and $e_{00} = f$.

This means the following.  If $n' = \prod \beta_k$ is any irreducible factorization of $n'$ in $\calO_K$, we have
$(n') = \q^{e_{11}}_1 \bar \q_1^{e_{12}} \cdots \q^{e_{1s}}_s \bar \q_s^{e_{2s}} \frak r^{e_{00}} = \prod (\beta_k)$ as ideals of $\calO_K$.  
By unique factorization of prime ideals, any $(\beta_k) =  \q^{g_{11}} \bar \q_1^{g_{12}} \cdots \q^{g_{1s}}_s \bar \q_s^{g_{2s}} \frak r^{g_{00}}$
where $0 \leq g_{ij} \leq e_{ij}$.  Passing to ideals of $\calO_L$, we see
each $\beta_k$ is a product of the $\alpha_{ij}$.  In other words, all the irreducible factorizations of $n'$ in $\calO_K$, up to units,
come from different groupings of the factorization $n' = u\prod \alpha_{ij}^{e_{ij}}$ in $\calO_L$, where $u$ is a unit of $\calO_K$.

Thus to determine the factorizations of of $n'$ in $\calO_K$, it suffices to determine when a product of the $\alpha_{ij}$ is an irreducible
element of $\calO_K$.  But this is simple! Note from the factorization of $Q_2(x,y)$ in (\ref{223fact}), we see that each $\alpha_{ij} \in \sqrt 2 K$.
Hence the product of any two $\alpha_{ij}$ lies in $K$, and therefore $\calO_K$, and must be irreducible since no individual
 $\alpha_{ij}  \in \calO_K$.  What we have done, together with the fact that the $\alpha_{ij}$'s are all nonassociate (they generate different ideals), proves the following.
 
If $\{ a_i \}$ is a collection of distinct objects, denote the multiset containing each $a_i$ with cardinality $m_i$ by $\{ a_i^{(m_i)}\}$.

\begin{prop} \label{prop1}
 Let $K=\Q(\sqrt{-5})$, $L=K(\sqrt 2)$ and $n \in \calO_K$ be a nonzero nonunit.  Write the prime ideal factorization of $(n)$ in $\calO_K$ as
 $(n) = \prod \p_i^{d_i} \prod \q_j^{e_j}$,
where each $\p_i \in \frak C_1$, $\q_j \in \frak C_2$ and the $\p_i$'s and $\q_j$'s are all distinct.  Let $\pi_i \in \calO_K$ and 
$\alpha_j \in \calO_L$ such that $\p_i = (\pi_i)$ and $\q_j\calO_L = (\alpha_j)$.  
Then the irreducible nonassociate factorizations of $n$ are precisely $n = u \prod \pi_i^{d_i} \prod \beta_k$ where $u$ is a unit, each $\beta_k$ is a product of two (not necessarily distinct) $\alpha_j$'s
and $\prod \beta_k = \prod \alpha_j^{e_j}$.

In particular $\eta_K(n)$ is the number of ways we can arrange the multiset $\{ \alpha_j^{(e_j)} \}$ in pairs, i.e., the number of partitions of this 
multiset into sub-multisets of size $2$.  In other words, if the number of distinct $\q_j$'s
is $m$, then $\eta_K(n)$ is the coefficient of $\prod x_j^{e_j}$ in the formal power series expansion of $\prod_{i\leq j} \frac 1{1-x_i x_j}$ in
$\Z[[x_1, x_2, \ldots, x_m]]$.
\end{prop}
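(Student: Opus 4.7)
The plan is to consolidate the discussion preceding the proposition into a clean argument, tracking units and nonassociateness carefully, and then to read off the generating function from standard combinatorics.

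First, I would reduce to the case where the principal-class primes are absent. Since each $\pi_i$ is a prime element of $\calO_K$ (its ideal $(\pi_i) = \p_i$ is prime), unique factorization of ideals plus induction forces any irreducible factorization of $n$ to contain $\pi_i^{d_i}$ up to units, so the problem reduces to $n' = n/\prod \pi_i^{d_i}$ with $(n') = \prod \q_j^{e_j}$. Because $(n')$ is principal and each $[\q_j]$ is the nontrivial class of $\Cl_K \simeq \Z/2\Z$, the exponent sum $\sum e_j$ must be even. Passing to $\calO_L$, we get $(n')\calO_L = \prod (\alpha_j)^{e_j} = (\prod \alpha_j^{e_j})$; since each $\alpha_j \in \sqrt{2}\, K$ and $\sum e_j$ is even, the element $\prod \alpha_j^{e_j}$ actually lies in $\calO_K$, so $n' = u \prod \alpha_j^{e_j}$ for some $u \in \calO_K^\times$.

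Next, for any irreducible factorization $n' = \prod \beta_k$ in $\calO_K$, unique prime ideal factorization yields $(\beta_k) = \prod \q_j^{g_{jk}}$ with $\sum_k g_{jk} = e_j$, and passage to $\calO_L$ gives $\beta_k = u_k \prod \alpha_j^{g_{jk}}$ for some $u_k \in \calO_L^\times$. For $\beta_k \in \calO_K$ we need $\sum_j g_{jk}$ even, and I claim the value is exactly $2$: value $0$ makes $\beta_k$ a unit; value $\geq 4$ permits splitting off any two $\alpha_j$'s to produce a proper $\calO_K$-factorization of $\beta_k$ (each half being a product of an even number of $\alpha$'s, hence in $\calO_K$, and a nonunit since its ideal is nontrivial); value $2$ is irreducible because a proper $\calO_K$-factorization of $\beta_k$ with $(\beta_k) = \q_{j_1}\q_{j_2}$ would force one of $\q_{j_1}, \q_{j_2}$ to be generated by an element of $\calO_K$, contradicting nonprincipality. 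Hence each irreducible factorization of $n'$ (up to associates) corresponds to a partition of the multiset $\{\alpha_j^{(e_j)}\}$ into pairs, with $u$ absorbing all of the $u_k$'s, which establishes the first assertion.

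For the second assertion, different pair-partitions yield different multisets $\{(\beta_k)\}$ of ideals in $\calO_L$, hence nonassociate factorizations in $\calO_K$; this is the main point of bookkeeping and is handled by noting that the multiset of ideals $\{(\beta_k)\}$ is an associate-invariant of the factorization and, via prime ideal factorization in $\calO_L$, recovers the pairing. Finally, the generating function identity follows from standard combinatorics: each unordered pair $\{\alpha_i,\alpha_j\}$ with $i \leq j$ contributes the monomial $x_i x_j$ per occurrence and may appear with any nonnegative multiplicity, so $\prod_{i \leq j}(1 - x_i x_j)^{-1}$ is the generating function whose $\prod x_j^{e_j}$-coefficient is exactly the number of pair-partitions of $\{\alpha_j^{(e_j)}\}$.
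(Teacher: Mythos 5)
Your proposal is correct and follows essentially the same route as the paper: reduce to $n'$ by stripping off the principal primes, pass to $L=K(\sqrt 2)$ where each $\q_j$ becomes $(\alpha_j)$ with $\alpha_j\in\sqrt2\,K$, observe that exactly the products of two $\alpha_j$'s give irreducibles of $\calO_K$ (with minimality/irreducibility controlled by the class group $\Z/2\Z$), and check nonassociateness via the ideal multiset. Your handling of the units $u_k$ and of distinguishing distinct pair-partitions is, if anything, slightly more careful than the paper's terse version, but the argument is the same.
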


Note the final sentence is essentially a tautology.

Thus, in addition to $\eta_K(n)$, we have provided an explicit determination of the irreducible factorizations of an arbitrary $n \in \calO_K$, provided
one knows the (irreducible) factorization in $\calO_L$.  (The point is that there is a choice of which of $\alpha_{1j}$ and $\alpha_{2j}$ is 
$\q_j\calO_L$ and which is $\bar \q_j \calO_L$ in the above argument.)
However if $n \in \Z$, then it suffices to know the prime factorization 
$n = 2^f \prod p_k'^{d_k'} \prod p_i^{d_i}  \prod q_j^{e_j}$ in $\Z$, where each $p_k' \in \calP_0$, $p_i \in \calP_1$, $q_j \in \calP_2^{unr}$
and they are all distinct.  For then each $p_k'$ is irreducible in $\calO_K$, the irreducible factorization each of $p_i$ in $\calO_K$ 
is given by solving $p_i = x^2+5y^2=(x+\sqrt{-5} y)(x-\sqrt{-5}y)$, and the factorization $q_j = \alpha_{1j}\alpha_{2j}$ in $\calO_L$ above
is given by solving $q_j = 2x^2+2xy+3y^2$.  Here there is no need to worry which of $\alpha_{1j}$ and $\alpha_{2j}$ correspond to which of
$\q_j$ and $\bar \q_j$ since both $\q_j$ and $\bar \q_j$ both occur to the same exponent $e_j$.

\medskip
Except for the explicit factorization we get from the quadratic forms $Q_1$ and $Q_2$ above, all of this is true for arbitrary number fields with class number $2$.  

\begin{prop} \label{prop2} 
If $K$ is a number field with class number $2$, and $n \in \calO_K$ is a nonzero nonunit, write the prime ideal factorization of $(n)$ as
$(n) = \prod \p_i^{d_i} \prod_{j=1}^m \q_j^{e_j}$ where each $\p_i$ is principal, $\q_j$ is nonprincipal, and the $\p_i$'s and $\q_j$'s are all distinct.
Consider the formal power series $f(x_1, \ldots, x_m) \in \Z[[x_1, \ldots, x_m]]$ formally given by $f(x_1, \ldots, x_m) = \prod_{i,j} \frac 1{1-x_i x_j}$.
Then $\eta_K(n)$ is the number of ways we can arrange the multiset $\{ x_j^{(e_j)} \}$ in pairs, i.e., the coefficient of $\prod x_j^{e_j}$
in $f(x_1, \ldots x_m)$.
\end{prop}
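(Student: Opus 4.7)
The plan is to run the argument of Proposition~\ref{prop1} (which really only uses that $\Cl_K \simeq \Z/2\Z$) while replacing the role of the extension $L = K(\sqrt 2)$ with a purely ideal-theoretic argument inside $\calO_K$; the combinatorial count is independent of having an explicit principalizing extension on hand.

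First I would reduce to counting factorizations of $n' = n / \prod \pi_i^{d_i}$, where $\pi_i \in \calO_K$ generates the principal prime $\p_i$. Each $\pi_i$ is automatically irreducible (its ideal is prime and principal), so the factor $\prod \pi_i^{d_i}$ appears in every irreducible factorization of $n$ up to units, giving $\eta_K(n) = \eta_K(n')$.

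Next I would classify the irreducible divisors of $n'$ in $\calO_K$. Any divisor $\beta$ satisfies $(\beta) = \prod_j \q_j^{g_j}$ with $0 \le g_j \le e_j$, and since each $\q_j$ lies in the unique nonprincipal class of order $2$, principality of $(\beta)$ forces $\sum_j g_j$ to be even. Irreducibility then forces this sum to equal $2$: if $\sum g_j \ge 4$, one can pick any two $\q$-factors and group them as $\mathfrak{a}$, letting $\mathfrak{b}$ be the rest, so that both $\mathfrak{a}$ and $\mathfrak{b}$ are products of an even number of nonprincipal primes, hence principal and proper, giving a nontrivial factorization of $\beta$. Conversely, given any pair $\q_a, \q_b$ drawn from the multiset $\{\q_j^{(e_j)}\}$, the product $\q_a \q_b$ is principal, its generator is determined up to $\calO_K^\times$, and is irreducible by the same argument. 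Thus nonassociate irreducible factorizations of $n'$ correspond bijectively to partitions of the multiset $\{\q_j^{(e_j)}\}$ into sub-multisets of size $2$, and $\eta_K(n)$ equals the number of such pairings.

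The final step is the mechanical generating function identity: expanding $\prod_{i \le j}(1 - x_i x_j)^{-1} = \prod_{i \le j} \sum_{k \ge 0}(x_i x_j)^k$, the coefficient of $\prod_j x_j^{e_j}$ counts multisets of unordered index-pairs in which each index $j$ appears with total multiplicity $e_j$, which is precisely the number of pairings. The only real technical content lies in the minimality step of the classification of irreducibles, and this is exactly where the hypothesis $h_K = 2$ does the work: it simultaneously guarantees that even-length products of nonprincipal primes are principal and that any overly long such product can be split off into a principal pair plus a principal remainder.
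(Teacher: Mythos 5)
Your proof is correct, but it takes a different route from the paper, which does not prove Proposition~\ref{prop2} directly: it derives it either from \cite{CHR} combined with the $\Q(\sqrt{-5})$ example, or as a special case of Theorem~\ref{thethm}, whose proof passes through a principalization field $L$ and identifies each irreducible of $\calO_K$ dividing $n$ with a grouping of the $\alpha_{ij}$'s in $\calO_L$. You dispense with the extension entirely and argue inside $\calO_K$: unique factorization of prime ideals shows every irreducible divisor $\beta$ of $n'$ has $(\beta) = \prod \q_j^{g_j}$ with $\sum g_j$ even (principality) and in fact $\sum g_j = 2$ (any longer even product splits into two proper principal subproducts), so nonassociate factorizations biject with pairings of the multiset $\{\q_j^{(e_j)}\}$. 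This is sound -- the key point you correctly isolate is that for the \emph{count} $\eta_K(n)$ one only needs to know which subproducts of prime ideals are principal, which is pure class-group data; the principalization field in the paper's Theorem~\ref{thethm} buys the additional, stronger conclusion that all the irreducible factorizations are realized as regroupings of a single explicit factorization $n \sim \prod \alpha_{ij}$ in $\calO_L$, which is the paper's main thrust but is not needed here. Two small points: you should note (as you implicitly do via ``generator determined up to $\calO_K^\times$'') that distinct pairings yield nonassociate factorizations because distinct pairs of the $\q_j$'s generate distinct ideals; and your generating function $\prod_{i \le j}(1 - x_i x_j)^{-1}$ is the correct one, matching Proposition~\ref{prop1} rather than the $\prod_{i,j}$ written in the statement of Proposition~\ref{prop2}, which appears to be a typo in the paper.
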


One can either conclude this result from the work of \cite{CHR} together with our example of $K=\Q(\sqrt{-5})$ or remark it is a special
case of our main result, Theorem \ref{thethm}, below.
In \cite{CHR}, the authors prove a recursive formula for $\eta_K(n)$, being recursive on the number of 
nonprincipal prime ideal factors of $(n)$, which is independent of field $K$ (in fact it is valid more generally for Krull domains also, but we will not 
stress this).  As the formula in \cite{CHR} is rather complicated, we will not state their complete formula here, but just give the first
two cases to give the reader an idea of form of their expressions.  
In the notation of the corollary above, they show, assuming $e_1 \leq e_2 \leq \cdots \leq e_m$,
that  $\eta_K(n) = \eta_{\mathbb X_{m+1}}(e_1,e_2, \ldots, e_m, \frac{e_1 + \ldots + e_m}2)$ where 
\begin{align*}
 \eta_{\mathbb X_2}(x_1,x_2) &= \lfloor \frac{\min(x_1,x_2)}2 \rfloor + 1 \\
 \eta_{\mathbb X_3}(x_1,x_2,x_3) & = \sum_{j=0}^{\lfloor \frac{x_1}2 \rfloor} \sum_{k=0}^{x_1-2j} \eta_{\mathbb X_2}(x_2-k,x_3-x_1+2j+k),
\end{align*} 
and the expression for $\eta_{\mathbb X_{m+1}}$ involves an $m$-fold summation over $\eta_{\mathbb X_m}$.

Hence our approach of principalization and factoring the form $Q_2(x,y)$ in $K(\sqrt{2})$ provides a much nicer combinatorial answer to the
question of what is $\eta_K(n)$.   
 We now proceed to see what our result says in some simple cases.
For the rest of this section, we maintain the notation of Proposition \ref{prop2}.

The first thing we observe is that $\sum e_i$ must be even.

\begin{cor} An irreducible factorization of $n$ is unique, i.e., $\eta_K(n)=1$, if and only if (i) there is at most one nonprincipal prime ideal dividing
$(n)$, or (ii) $(n) = \prod \p_i^{d_i} \cdot \q_1^e \q_2$ where the $\p_i$'s are principal, the $\q_i$'s are nonprincipal, and $e$ is odd.
\end{cor}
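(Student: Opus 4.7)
The plan is to invoke Proposition~\ref{prop2} to turn the question into a purely combinatorial one: count the pairings of the multiset $\{\q_j^{(e_j)}\}$. Write $(n) = \prod \p_i^{d_i}\prod_{j=1}^m \q_j^{e_j}$ with the $\q_j$'s the distinct nonprincipal prime divisors of $(n)$. Since $(n)$ is principal and $\Cl_K \cong \Z/2\Z$, the ideal $\prod \q_j^{e_j}$ must be principal, which forces $\sum e_j$ to be even; in particular, at least one pairing exists.

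I would then split on $m$. If $m=0$ or $m=1$ a pairing is manifestly unique (empty when $m=0$, and when $m=1$ all pairs must be $(\q_1,\q_1)$); this is case (i). If $m=2$, I would examine $\min(e_1,e_2)$: if this minimum equals $1$, say $e_2=1$, then the single copy of $\q_2$ is forced to pair with some $\q_1$, leaving $e_1-1$ copies of $\q_1$ (an even number, since $e_1+1$ is even) that must pair among themselves in the unique way $(\q_1,\q_1)\cdots(\q_1,\q_1)$; this is case (ii) with $e=e_1$ odd.

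In every remaining case I would exhibit two distinct pairings. For $m \geq 3$, fix three distinct nonprincipal primes $\q_1,\q_2,\q_3$ dividing $(n)$ and build pairing $A$ containing the pair $(\q_1,\q_2)$ and pairing $B$ containing the pair $(\q_1,\q_3)$, completing each arbitrarily; this is possible because any multiset of even total cardinality admits a pairing, and $A$ and $B$ differ in the pair containing $\q_1$. For $m=2$ with $\min(e_1,e_2) \geq 2$, I would instead contrast the partial pairings $(\q_1,\q_1)(\q_2,\q_2)$ and $(\q_1,\q_2)(\q_1,\q_2)$ on four symbols, completing the remaining (even) multiset arbitrarily and identically in both.

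I do not expect a real obstacle; the result is combinatorial bookkeeping once Proposition~\ref{prop2} is granted. The one point meriting care is verifying that the pairings exhibited in the last paragraph are genuinely distinct as unordered multisets of pairs, but in each construction the partner of a distinguished element (namely $\q_1$) is different, so the multisets of pairs themselves differ.
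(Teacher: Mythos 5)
The paper offers no written proof of this corollary; it is stated as an immediate consequence of Proposition~\ref{prop2}, so your overall strategy --- reduce via Proposition~\ref{prop2} to counting pairings of the multiset $\{\q_j^{(e_j)}\}$ and split on the number $m$ of distinct nonprincipal primes --- is exactly the intended route, and your treatment of $m\le 1$ and of both subcases of $m=2$ is correct.

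There is, however, a genuine flaw in your $m\ge 3$ case. You build a pairing $A$ containing $\{\q_1,\q_2\}$ and a pairing $B$ containing $\{\q_1,\q_3\}$, complete each arbitrarily, and assert they differ ``in the pair containing $\q_1$.'' When $e_1\ge 2$ there is no single pair containing $\q_1$, and an arbitrary completion can make $A=B$: for the multiset $\{\q_1,\q_1,\q_2,\q_3\}$ (i.e.\ $e_1=2$, $e_2=e_3=1$) your recipe forces $A=\{\{\q_1,\q_2\},\{\q_1,\q_3\}\}=B$, even though $\eta_K(n)=2$ here, the second pairing being $\{\{\q_1,\q_1\},\{\q_2,\q_3\}\}$. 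The repair is easy: since $\sum e_j$ is even and at least $4$, remove one copy each of $\q_1,\q_2,\q_3$ together with one further element $x$ of the multiset, pair the (even) remainder once and for all as $C$, and note that the four-element multiset $\{\q_1,\q_2,\q_3,x\}$ contains at least three distinct members and hence admits at least two distinct perfect pairings; adjoining the same $C$ to each yields two distinct pairings of the whole multiset. With that correction your argument is complete.
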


In particular, to return to the original example of $K=\Q(\sqrt{-5})$, if $n \in \Z$, then $\eta_K(n) = 1$ if and only if (i) it is not divisible by any primes
in $\calP_2^{unr}$, i.e., any primes of the form $q \equiv 3, 7 \mod 20$, or (ii) $n = q \prod p_i^{d_i}$ where the $p_i$'s and $q$ are primes
with $q \equiv 3, 7 \mod 20$ and each $p_i \in \calP_0 \cup \calP_1$, i.e., $p_i \not \equiv 3, 7, \mod 20$ and $p_i$ odd.
This classification of $n \in \N$ with $\eta_{\Q(\sqrt{-5})}(n)=1$ was previously established by Fogels  
using an approach similar in spirit to ours in \cite{Fogels}, where he used this to show that ``almost all''
$n \in \N$ do not have unique factorization.

\begin{cor} If $m=2$, then $\eta_K(n) = \lfloor \frac{\min(e_1,e_2)}2 \rfloor + 1$.
\end{cor}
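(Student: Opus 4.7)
The plan is to apply Proposition \ref{prop2} directly. When $m = 2$, $\eta_K(n)$ counts the partitions of the multiset $\{x_1^{(e_1)}, x_2^{(e_2)}\}$ into blocks of size two. Because only two distinct symbols appear, every block has one of three shapes: $\{x_1, x_1\}$, $\{x_2, x_2\}$, or $\{x_1, x_2\}$. I would parametrize a partition by the triple $(a, b, c)$ of nonnegative integers counting how many blocks of each respective shape occur.

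The constraints $2a + c = e_1$ and $2b + c = e_2$ force $a$ and $b$ to be determined by $c$. Hence the partitions are in bijection with the admissible values of $c$, namely integers with $0 \leq c \leq \min(e_1, e_2)$ and $c \equiv e_1 \equiv e_2 \pmod{2}$. The parities of $e_1$ and $e_2$ automatically agree, since the paper already records that $\sum e_i$ must be even. Whether both exponents are even or both odd, the admissible values of $c$ form an arithmetic progression of common difference $2$ ending at $\min(e_1, e_2)$; a direct count in either case yields $\lfloor \min(e_1, e_2)/2 \rfloor + 1$ terms.

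There is no substantive obstacle here. The only minor subtlety is verifying that the even and odd parity cases collapse to the same closed form via the floor function, which is immediate once one writes out the progression explicitly.
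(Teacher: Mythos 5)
Your proposal is correct and follows essentially the same route as the paper: the paper also parametrizes partitions by the number of mixed pairs $\{x_1,x_2\}$ (your $c$, its $k$), subject to $0 \leq k \leq \min(e_1,e_2)$ and $e_i - k$ even, and counts the resulting arithmetic progression. Your additional remarks on the parity agreement of $e_1$ and $e_2$ and on the determination of $a$ and $b$ from $c$ are accurate but just make explicit what the paper leaves implicit.
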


Note this matches with the formula for $\eta_{\mathbb X_2}(x_1,x_2)$ in \cite{CHR}.  This was observed earlier in the case of elementary abelian
$2$- class groups  (\cite[Example 1]{HKark}).

\begin{proof} We want to count the number of ways we can pair $e_1$ $x_1$'s and $e_2$ $x_2$'s.  This is simply determined by the
number of $x_1$'s which are paired up with $x_2$'s.  This can be any number $k$ between $0$ and $\min(e_1,e_2)$ such that $e_i - k$ is
even.
\end{proof}

In the special case $K=\Q(\sqrt{-5})$, this means if $q \in \calP_2^{unr}$ then $h_K(q^e) = \lfloor \frac{e}2 \rfloor + 1$.

\begin{cor} \label{cor3}
 If $e_1=e_2= \cdots = e_m = 1$, then $\eta_K(n) = (m-1)!!$.
\end{cor}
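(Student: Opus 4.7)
The plan is to invoke Proposition \ref{prop2} directly. With all $e_j = 1$, the multiset $\{x_j^{(e_j)}\}$ reduces to the ordinary set $\{x_1, x_2, \ldots, x_m\}$ of $m$ distinct symbols, and $\eta_K(n)$ becomes the number of ways to arrange this set into unordered pairs, i.e., the number of perfect matchings on an $m$-element set. Because the class group has order $2$, principality of $\prod \q_j^{e_j}$ forces $\sum e_j = m$ to be even, so $m$ is necessarily even whenever such an $n$ exists; thus we need only verify the count in this case.

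The remaining task is the classical identity that the number of perfect matchings on a set of $m = 2k$ elements is $(m-1)!!$. I would verify this by induction on $k$: the element $x_1$ may be paired with any of the other $m-1$ elements, and after this choice the remaining $m-2$ elements must themselves be partitioned into pairs in $(m-3)!!$ ways by the inductive hypothesis, giving $(m-1)(m-3)!! = (m-1)!!$; the base case $m=2$ trivially contributes one matching, matching $(2-1)!! = 1$. As a sanity check one can also read this off the generating function of Proposition \ref{prop2}: since each $x_i$ must appear to exactly the first power, from each factor $\tfrac{1}{1-x_i x_j}$ one picks either the constant term or a single copy of $x_i x_j$, and the admissible selections are precisely perfect matchings, each contributing coefficient $1$.

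I do not foresee any real obstacle: Proposition \ref{prop2} carries all of the number-theoretic content, and what is left is a textbook counting identity. The only substantive point worth flagging is the parity observation that $m$ must be even for any factorization to exist, so that $(m-1)!!$ is interpreted in its usual matching-count sense and the stated equality is not vacuous.
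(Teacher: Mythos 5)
Your proposal is correct and follows the same route as the paper: both reduce via Proposition \ref{prop2} to counting the partitions of the set $\{x_1,\ldots,x_m\}$ into pairs and identify this count as $(m-1)!!$. The paper simply states this identity without proof, whereas you supply the standard induction and the parity remark; no substantive difference.
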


\begin{proof} This is just the number of ways in which we can arrange the set $\{ x_1, \ldots, x_m \}$ in pairs, which is
$(m-1)!! = (m-1)(m-3)\cdots 1$.
\end{proof}

When $K=\Q(\sqrt{-5})$ and $q_1, \ldots, q_k$ are distinct primes in $\calP_2^{unr}$, this means $h_K(q_1 \cdots q_k) = (2k-1)!!$.

\section{General Number Fields} \label{sec3}

Let $K$ be an arbitrary number field and let $\Cl_K = \{ \frak C_i \}$ be the ideal class group of $K$.  
Denote the class of principal ideals in $\calO_K$ by $\frak I$.

We say $K_i$ is a {\em
principalization field} for $\frak C_i$ if $K_i$ is an extension of $K$ such that every ideal in $\frak C_i$ becomes principal in $\calO_{K_i}$.
Such a field always exists.  For example if $\frak C_i$ has order $m$, then for any ideal $\frak a \in \frak C_i$, we have $\frak a^m$ is principal.
Say $\frak a^m = (a)$.
Consequently $\frak a$, and therefore every ideal in $\frak C_i$, becomes principal in the field $K_i = K(\sqrt[m] a)$.

We say $L$ is a {\em prinicpalization field} for $K$ if every ideal in $\calO_K$ becomes principal in $\calO_L$.  For instance if $K_i$ is a
principalization field for $\frak C_i$ for each $\frak C_i \in \Cl_K$, then the compositum $L= \prod K_i$ is a principalization field for $K$.
By the principal ideal theorem of class field theory, the Hilbert class field of $K$ is a principalization field for $K$.

If $\alpha, \beta \in \calO_K$ and $\alpha = u \beta$ for a unit $u \in \calO_K$, i.e., if $\alpha$ and $\beta$ are associates, write
$\alpha \sim \beta$.

\begin{thm} \label{thethm}
Let $K$ be a number field and $\Cl_K= \{ \frak C_i \}$.  Let $n \in \calO_K$ be a nonzero nonunit.  
Suppose the prime ideal factorization of $n\calO_K$ is $(n) = \prod_{(i,j) \in T} \p_{ij}$
  where the $\p_{ij}$'s are (not necessarily distinct) prime ideals such that 
 $\p_{ij} \in \frak C_i$, and $T$ is some finite index set.  Let $K_i$ be a principalization field for $\frak C_i$,
  so $\p_{ij} \calO_{K_i} = (\alpha_{ij})$ for some $\alpha_{ij} \in \calO_{K_i}$.   Let $L = \prod K_i$.
  
  Then the irreducible factorizations of $n$ in $\calO_K$ are precisely the factorizations of the form $n = \prod \beta_l$ where 
  $\prod \beta_l \sim \prod \alpha_{ij}$ in $\calO_L$ and each $\beta_l$ is of the form
   $\beta_l \sim \prod_{(i,j) \in S} \alpha_{i j}$ in $\calO_L$ for $S$ a minimal (nonempty) subset of $T$ such that
   $\prod_{(i,j) \in S} \frak C_i = \frak I$.  (Here each $\beta_l$ is irreducible in $\calO_K$.)
\end{thm}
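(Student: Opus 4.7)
The plan is to reduce everything to the prime ideal factorization in $\calO_K$ and then pass between $\calO_K$ and $\calO_L$, using that $\calO_L$ has unique factorization up to units along the $\alpha_{ij}$'s dividing $n$. The theorem really says two things: (a) every irreducible factorization of $n$ arises by grouping the $\alpha_{ij}$'s into blocks whose ideal classes multiply to $\frak I$, in a minimal way; (b) every such grouping does give an irreducible factorization. I would prove (a) and (b) in turn.

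First, I would establish that in $\calO_L$ one has $\p_{ij}\calO_L = (\alpha_{ij})$ (since $K_i \subseteq L$ and $\p_{ij}\calO_{K_i} = (\alpha_{ij})$), and hence $(n)\calO_L = \prod (\alpha_{ij})$, so $n \sim \prod \alpha_{ij}$ in $\calO_L$. Now suppose $n = \prod \beta_l$ is any irreducible factorization in $\calO_K$. Taking ideals, $\prod (\beta_l) = \prod \p_{ij}$ in $\calO_K$, so by unique prime ideal factorization each $(\beta_l) = \prod_{(i,j) \in S_l} \p_{ij}$ for a partition $T = \bigsqcup S_l$. Since $(\beta_l)$ is principal in $\calO_K$, the product $\prod_{(i,j) \in S_l} \frak C_i$ equals $\frak I$. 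Passing to $\calO_L$, $(\beta_l)\calO_L = \prod_{(i,j) \in S_l} (\alpha_{ij})$, so $\beta_l \sim \prod_{(i,j) \in S_l} \alpha_{ij}$ in $\calO_L$. The irreducibility of $\beta_l$ in $\calO_K$ forces minimality of $S_l$: if $\emptyset \neq S' \subsetneq S_l$ satisfied $\prod_{(i,j) \in S'} \frak C_i = \frak I$, then $\prod_{(i,j) \in S'} \p_{ij} = (\gamma)$ for some $\gamma \in \calO_K$, and $\gamma$ would be a nonunit proper divisor of $\beta_l$ in $\calO_K$.

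For the converse, given a partition $T = \bigsqcup S_l$ with each $S_l$ minimal such that $\prod_{(i,j) \in S_l} \frak C_i = \frak I$, choose $\beta_l \in \calO_K$ so that $(\beta_l) = \prod_{(i,j) \in S_l} \p_{ij}$ in $\calO_K$. Each $\beta_l$ is irreducible in $\calO_K$ by reversing the previous argument: any factorization of $\beta_l$ in $\calO_K$ would correspond to a nontrivial splitting of $S_l$ into two nonempty subsets whose class products both equal $\frak I$, contradicting minimality. Moreover $\prod \beta_l$ generates $(n)$, so $\prod \beta_l = u n$ for some unit $u \in \calO_K^{\times}$; absorbing $u^{-1}$ into one of the $\beta_l$'s, we get a genuine factorization $n = \prod \beta_l$. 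In $\calO_L$, $\beta_l \sim \prod_{(i,j) \in S_l} \alpha_{ij}$, so $\prod \beta_l \sim \prod \alpha_{ij} \sim n$ in $\calO_L$, matching the stated form.

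The only mildly subtle point is the book-keeping of units: the statement only asserts equalities up to associates in $\calO_L$, which is the natural level at which $\prod \alpha_{ij}$ is well-defined, and we must confirm that this $\calO_L$-associate condition is compatible with genuine equality $n = \prod \beta_l$ in $\calO_K$. This is handled at the end of the converse by adjusting a single $\beta_l$ by a $\calO_K$-unit. The main conceptual step is the translation between minimal zero-sum subsets of the multiset $\{\frak C_i\}_{(i,j) \in T}$ in $\Cl_K$ and irreducible factors of $n$ in $\calO_K$; everything else is bookkeeping via unique factorization of ideals in $\calO_K$ and principality in $\calO_L$.
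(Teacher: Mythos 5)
Your proof is correct and takes essentially the same route as the paper's: unique factorization of prime ideals plus principality of each $(\beta_l)$ forces the class product over each block to be trivial, irreducibility forces minimality of the block, and minimality conversely forces irreducibility of the corresponding element. The only cosmetic difference is that the paper verifies $\p_{ij}\calO_L = (\alpha_{ij})$ by an explicit induction through the partial composita $K_{i_1}, K_{i_1}K_{i_2}, \ldots$, whereas you assert it directly from $K_i \subseteq L$; both are fine, and your explicit handling of the unit bookkeeping at the end is if anything slightly more careful than the paper's.
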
   

In other words, all irreducible factorizations $n$ in $\calO_K$ come from different groupings of the factorization $n \sim \prod \alpha_{ij}$
 in $\calO_L$.  Now a grouping of terms of this factorization in $\calO_L$ gives an irreducible factorization
in $\calO_K$ if and only if every group of terms gives an irreducible element of $\calO_K$ (possibly up to a unit in $\calO_L$).  (We will
call such a grouping {\em irreducible}.)
A product of $\alpha_{ij}$'s gives an element of $\calO_K$ if and only if the corresponding product of ideal classes $\frak C_i$ is trivial in
$\Cl_K$, and this element of $\calO_K$ will be irreducible if and only if no proper subproduct of the corresponding ideal classes is trivial.

It should be clear that this theorem gives a precise way that the class group measures the failure of unique factorization in $\calO_K$.  In particular,
the larger the class group, the more complicated the structure of the irreducible factorizations of an element can become.  Simple explicit
examples are given at the end of Section \ref{sec4}.  This theorem also connects Kummer's and Dedekind's approaches to resolving 
non-unique factorization in $\calO_K$.

We also remark that one could take each $K_i = L$ for any principalization field $L$ of $K$, but we will see in the next section reasons
why one may not always want to do this.  In fact, for specific $n$, $L$ need not be a principalization field for $K$, but just for the ideal classes
containing ideals dividing $n\calO_K$.

\begin{proof}  Suppose $n=\prod \beta_l$ is an irreducible factorization of $n$ in $\calO_K$, i.e., each $\beta_l$ is a (nonunit) irreducible.  
By unique factorization of prime ideals,
each $(\beta_l)$ is a subproduct of $\prod \p_{ij}$.  
Write $(\beta_l) = \prod_{(i,j) \in S} \p_{ij}$ where $S \subseteq T$.
Since $(\beta_l)$ is principal, the subproduct of prime ideals yielding
$(\beta_l)$ must be trivial in the class group, i.e., $\prod_{(i,j) \in S} \frak C_i = \frak I$.  
Further, $S$ must be minimal such that the corresponding product in the class group is trivial,
otherwise we would be able to write $(\beta_l)$ as a product of two principal ideals, contradicting irreducibility.

Write $S=\{ (i_1,j_1), (i_2, j_2), \ldots, (i_r, j_r) \}$, so that
\[ (\beta_l) = \p_{i_1j_1} \p_{i_2j_2} \cdots \p_{i_rj_r} \]
Observe 
\[ \beta_l \calO_{K_{i_1}} = (\alpha_{i_1 j_1}) \frak P_{i_2j_2} \cdots \frak P_{i_rj_r}, \]
 where
$\frak P_{i_ij_i} = \p_{i_ij_i} \calO_{K_{i_1}}$.  Passing to $\calO_{K_{i_1}K_{i_2}}$ and using the fact that $\p_{i_2j_2} = (\alpha_{i_2j_2})$
in $\calO_{K_{i_2}}$, we see that
\[ \beta_l \calO_{K_{i_1}K_{i_2}} = (\alpha_{i_1 j_1}) (\alpha_{i_2j_2}) \frak P^{(2)}_{i_3j_3} \cdots \frak P^{(2)}_{i_rj_r},\]  
where
$\frak P^{(2)}_{i_ij_i} = \frak P_{i_ij_i} \calO_{K_{i_1}K_{i_2}}$.  Proceeding inductively, we obtain
\[ (\beta_l) = (\alpha_{i_1 j_1}) (\alpha_{i_2j_2}) \cdots (\alpha_{i_rj_r}) \]
as ideals in $\calO_L$, yielding (ii) as asserted in the theorem.  

This proves that any irreducible factorization of $n$ in $\calO_K$ is of the form stated above, namely that any irreducible factorization of $n$
is obtained from a grouping of the terms in the (not necessarily irreducible) 
factorization $n = u \prod \pi_i \prod \alpha_{ij}$ in $\calO_L$ such that each group of terms
is minimal so that the corresponding product in the class group $\Cl_K$ is trivial.  (Here $u$ is some unit.)  It remains to show that any such
grouping gives an irreducible factorization of $\calO_K$. 

It suffices to show that if $S$ is a minimal subset of $T$ such that  $\prod_{(i,j) \in S}\frak C_i = \frak I$, then $u\prod_{(i,j)\in S} \alpha_{ij}$ is an 
irreducible element of $\calO_K$ for some unit $u \in \calO_L$.   
Suppose $S$ is such a subset.  Then
$\prod_{(i,j) \in S} \q_{ij} = (\beta)$ for some $\beta \in \calO_K$.  As above, looking at ideals in $\calO_L$, we see
$\beta \sim \prod_{(i,j) \in S} \alpha_{ij}$, hence the product on the right is, up to a unit of $\calO_L$, an element of $\calO_K$.  
If $\beta$ were reducible, say $\beta = \gamma \gamma'$ where $\gamma, \gamma' \in \calO_K$ are nonunits, then by unique factorization
into prime ideals, we would have $(\gamma) = \prod_{(i,j) \in S'} \q_{ij}$ where $S'$ is a proper subset of $S$, i.e., 
$\prod_{(i,j) \in S'} \frak C_i = \frak I$, contradicting the minimality of $S$.
\end{proof}

\begin{cor}  Let $K$ be a number field and $\Cl_K = \{ \frak C_i \}$.  Let $n \in \calO_K$ be a nonzero nonunit.  Suppose
$(n) = \prod_{(i,j) \in T} \p_{ij}^{e_{ij}}$, where the $\p_{ij}$'s are distinct prime ideals, each $\p_{ij} \in \frak C_i$, and $T$ is some index set.
Let $U$ be the multiset $U = \{ (i,j)^{(e_{ij})} : (i,j) \in T \}$. Then
 $\eta_K(n)$ is the coefficient of $\prod x_{ij}^{e_{ij}}$ in the formal power series 
   \[ f(x_{ij}) = \prod_S \frac 1{1-\prod_{(i,j) \in S}x_{ij}} \in \Z[[x_{ij}]] \, , \hspace{10pt} (i,j) \in T, \]
   where $S$ runs over all minimal sub-multisets of $U$ such that the product $\prod_{(i,j) \in S} \frak C_i =\frak I$.
    Combinatorially, $\eta_K(n)$ is the number of ways one can partition the multiset $\{ x_{ij}^{e_{ij}} \} $ into minimal subsets $V$
   such that $\prod_{x_{ij} \in V} \frak C_i = \frak I$.
\end{cor}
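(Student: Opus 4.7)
The plan is to derive the corollary directly from Theorem \ref{thethm} in two steps: first translate the structural classification of irreducible factorizations into a count of multiset partitions, then repackage that count as a coefficient of a formal power series.

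For the first step, I read off from Theorem \ref{thethm} that every irreducible factorization $n = \prod \beta_l$ in $\calO_K$ is determined (up to ordering of the factors and unit associates) by a partition of the multiset $U = \{(i,j)^{(e_{ij})}\}$ of prime ideal factors of $(n)$ into blocks $V$, where each $V$ is a minimal sub-multiset of $U$ with $\prod_{(i,j) \in V} \frak C_i = \frak I$. The assignment of each $\beta_l$ to its corresponding block, supplied by the theorem, is a bijection between irreducible factorizations and such partitions, which gives the combinatorial half of the corollary.

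For the second step, let $\{S\}$ enumerate the distinct minimal sub-multisets of $U$ whose class-product is trivial. A partition of $U$ into allowed blocks is specified uniquely by the nonnegative integer multiplicities $c_S$ recording how many times each $S$ appears; the compatibility condition is that for each $(i,j) \in T$,
\[ \sum_S c_S \, \mu_S(i,j) = e_{ij}, \]
where $\mu_S(i,j)$ is the multiplicity of $(i,j)$ inside $S$. Encoding each block $S$ by the monomial $m_S = \prod_{(i,j) \in S} x_{ij}$, the compatibility condition is equivalent to $\prod_S m_S^{c_S} = \prod_{(i,j)} x_{ij}^{e_{ij}}$. Since
\[ \prod_S \frac{1}{1-m_S} = \prod_S \sum_{c_S \geq 0} m_S^{c_S}, \]
the coefficient of $\prod x_{ij}^{e_{ij}}$ on the left equals the number of tuples $(c_S)$ satisfying this equation, which by the bijection of the first step is $\eta_K(n)$.

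The only point requiring care is that there may be infinitely many minimal $S$, so that $f(x_{ij})$ is a genuinely infinite product. This causes no obstacle: only those $S$ for which $m_S$ divides $\prod x_{ij}^{e_{ij}}$ can contribute to the coefficient in question, and these are finite in number, so the coefficient extraction is well defined in $\Z[[x_{ij}]]$. The principal conceptual step, and what I expect to be the main thing to verify cleanly, is the bijection in step one between irreducible factorizations and partitions into minimal blocks, but this is precisely the content of Theorem \ref{thethm} applied to the multiset $U$.
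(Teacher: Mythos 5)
Your route is the same as the paper's: use Theorem \ref{thethm} to identify irreducible factorizations of $n$ with partitions of $U$ into minimal class-trivial blocks, then encode a partition by the multiplicities $c_S$ of its block types to read off the count as the coefficient of $\prod x_{ij}^{e_{ij}}$ in $\prod_S (1-m_S)^{-1}$. The power-series step is correct (and in fact $U$ is a finite multiset, so there are only finitely many admissible $S$ and the product is finite; your worry about an infinite product is moot). The gap is in the first step: you assert that the correspondence between irreducible factorizations and admissible partitions is a bijection ``supplied by the theorem,'' but Theorem \ref{thethm} only tells you that every irreducible factorization arises from some admissible partition and that every admissible partition yields an irreducible factorization. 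It does not say that \emph{distinct} partitions yield \emph{nonassociate} factorizations; without that, the number of partitions is only an upper bound for $\eta_K(n)$. This injectivity is precisely what the paper's proof of the corollary supplies, and it is the only content of the corollary beyond the theorem, so it cannot be waved through.

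The missing verification is short. If two blocks $S$ and $S'$ give associate elements $\beta \sim \beta'$ of $\calO_K$ (equivalently, $\prod_{(i,j)\in S}\alpha_{ij} \sim \prod_{(i,j)\in S'}\alpha_{ij}$ in $\calO_L$), then $(\beta) = (\beta')$ as ideals of $\calO_K$, i.e.\ $\prod_{(i,j)\in S}\p_{ij} = \prod_{(i,j)\in S'}\p_{ij}$, and unique factorization into prime ideals forces $S = S'$ as sub-multisets. (The paper runs this comparison at the level of ideals of $\calO_L$ and intersects back down to $\calO_K$; arguing directly in $\calO_K$ as above works too.) Hence an irreducible factorization, viewed as a multiset of associate classes of irreducibles, determines its partition, and the correspondence really is a bijection. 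With this inserted, your argument is complete and matches the paper's.
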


\begin{proof} Let $K_i$ be a principalization field for $\frak C_i$, and write $\p_{ij}\calO_{K_i} = (\alpha_{ij})$.  Set $L = \prod K_i$.
Then we have $n \sim \prod_T \alpha_{ij}^{e_{ij}} = \prod_U \alpha_{ij}$ over $\calO_L$.  By the theorem,
the irreducible factorizations of $n$ in $\calO_K$ correspond to the partitions of $U$ into minimal sub-multisets $S$ such that
$\prod_S \frak C_i = \frak I$.  Hence it remains to show that any two distinct partitions give nonassociate factorizations of $n$.  

It suffices to prove that if $\prod_S \alpha_{ij} \sim \prod_{S'} \alpha_{ij}$ over $\calO_L$ for two sub-multisets $S, S'$ of $T$, then $S = S'$.
But this hypothesis means that 
\[ \prod_S \p_{ij} \calO_L = \prod_S \alpha_{ij} \calO_L = \prod_{S'} \alpha_{ij} \calO_L = \prod_{S'} \p_{ij} \calO_L. \]
Intersecting our ideals with $\calO_K$ gives $\prod_S \p_{ij} = \prod_{S'} \p_{ij}$, which means $S=S'$ by unique factorization into 
prime ideals.
\end{proof}

The current approach to investigating lengths and number of factorizations has primarily been through block and type monoids
\cite[Chapter 3]{GHK}.  Our theorem essentially gives the theory of block and type monoids in the case of rings of integers of number fields.
In particular it can be used to provide new proofs of many known results in the theory of non-unique factorizations.  Here we just illustrate
the most basic example of Carlitz's result.

If $n$ is a nonzero nonunit in $\calO_K$ and $n = \prod \alpha_i$ where each $\alpha_i$ (not necessarily distinct) is a (nonunit) irreducible
of $\calO_K$, we say the number of $\alpha_i$'s occurring in this product (with multiplicity) is the {\em length} of this factorization.

\begin{cor} {\em(\cite{Carlitz})} Let $K$ be a number field.  Every irreducible factorization of $n$ in $\calO_K$ has the same length for
all nonzero nonunits $n \in \calO_K$ if and only if $h_K \leq 2$.
\end{cor}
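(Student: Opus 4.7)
The plan is to deduce this directly from Theorem \ref{thethm}, which tells us that every irreducible factorization of $n$ corresponds to a partition of the multiset of prime ideal factors of $(n)$ into minimal blocks whose product of ideal classes is trivial. So the length of a factorization equals the number of blocks in such a partition, and Carlitz's statement becomes a purely combinatorial/group-theoretic question about $\Cl_K$: the length is constant iff every multiset of elements of $\Cl_K$ whose product is trivial admits a unique ``block-count'' regardless of how one partitions it into minimal trivial sub-multisets.

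For the forward direction (``if''), I would handle $h_K=1$ trivially (unique factorization). For $h_K=2$, write $(n)=\prod \p_i^{d_i}\prod \q_j^{e_j}$ with $\p_i$ principal and $\q_j$ nonprincipal. The only minimal trivial sub-multisets of classes in $\Z/2\Z$ are singletons of the identity class and pairs of the nontrivial class; hence every partition corresponds to taking each $\p_i$ as its own block and grouping the $\q_j$'s into pairs. Every irreducible factorization therefore has length $\sum d_i + \tfrac12 \sum e_j$, which is independent of the partition. (In particular $\sum e_j$ is even, which also follows from the theorem.)

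For the reverse direction (``only if''), I would assume $h_K \geq 3$ and exhibit in each case an $n$ with two factorizations of different lengths, using Chebotarev's density theorem to choose prime ideals in prescribed classes. Split into two cases:

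\textbf{Case A:} $\Cl_K$ contains an element $\frak C$ of order $d\geq 3$. Pick prime ideals $\p \in \frak C$ and $\q \in \frak C^{-1}$ and take $n$ with $(n)=\p^d\q^d$. On one hand, the partition $\{\p^{(d)}\}\sqcup\{\q^{(d)}\}$ is into two minimal trivial blocks (minimality because $\frak C^k\neq \frak I$ for $0<k<d$), yielding length $2$. On the other hand, the partition into $d$ copies of $\{\p,\q\}$ yields length $d\geq 3$.

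\textbf{Case B:} every nontrivial element of $\Cl_K$ has order $2$, so $\Cl_K\cong (\Z/2\Z)^r$ with $r\geq 2$. Pick distinct nontrivial classes $\frak C_1,\frak C_2$ and set $\frak C_3=\frak C_1\frak C_2\neq \frak I$. Choose prime ideals $\p_i\in\frak C_i$ for $i=1,2,3$, and take $n$ with $(n)=\p_1^2\p_2^2\p_3^2$. The partition into three pairs $\{\p_i,\p_i\}$ has length $3$, while the partition into two triples $\{\p_1,\p_2,\p_3\}$ (minimal since no pair of the $\frak C_i$'s multiplies to $\frak I$) has length $2$.

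The main obstacle is organizing the reverse direction to cover both structural possibilities for $\Cl_K$ when $h_K\geq 3$; once one sees that an elementary abelian $2$-group of rank $\geq 2$ must be handled separately from groups with an element of order $\geq 3$, the combinatorics is immediate from Theorem \ref{thethm}, and Chebotarev supplies the required prime ideals in each class.
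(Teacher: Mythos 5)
Your proposal is correct and follows essentially the same route as the paper: deduce the $h_K\leq 2$ direction immediately from Theorem \ref{thethm}, and for $h_K>2$ split into the case of an element of order $\geq 3$ (using $(n)=\p^d\q^d$ with groupings of lengths $2$ and $d$) versus an elementary abelian $2$-group of rank $\geq 2$ (using $(n)=\p_1^2\p_2^2\p_3^2$ with groupings of lengths $2$ and $3$). The only difference is that you explicitly invoke Chebotarev to produce prime ideals in prescribed classes, which the paper leaves implicit.
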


\begin{proof} It is immediate from the theorem (or Proposition \ref{prop2}) that if $h_K \leq 2$, then every irreducible factorization of an element
must have the same length.  Suppose $h_K > 2$.  

First suppose $\Cl_K$ has an element $\frak C$ of order $e > 2$.  Then let $\p \in \frak C$ and $\q \in \frak C^{-1}$ be prime ideals of $\calO_K$.
Let $n \in \calO_K$ such that $(n) = \p^e \q^e$.  Then one irreducible factorization of $n$ is corresponds
to the grouping $(n) = (\p\q)(\p\q) \cdots (\p\q)$ which has length $e >2$.  Another irreducible factorization of $n$ corresponds to the grouping
$(n) = (\p^e )(\q^e)$, which has length $2$.

Otherwise $\Cl_K$ has at least three elements $\frak C_1$, $\frak C_2$ and $\frak C_3 = \frak C_1 \frak C_2$ of order $2$.  
Let $\p_i \in \frak C_i$ be a prime
ideal of $\calO_K$ for each $i=1,2,3$.   Let $n \in \calO_K$ such that $(n) = \p_1^2 \p_2^2 \p_3^2$.  The two different groupings
$(\p_1 \p_2 \p_3)(\p_1 \p_2 \p_3)$ and $(\p_1^2)(\p_2^2)(\p_3^2)$ give irreducible factorizations of $n$ of lengths $2$ and $3$.
\end{proof}

This proof might be considered a slight simplification, but it does not differ in any essential way from Carlitz's original proof.
However, looking at this proof suggests that if $\Cl_K \simeq \Z/h\Z$ then the ratio of the maximal length of an irreducible factorization of $n$
to a minimal length is bounded by $\frac h2$ for any nonzero nonunit $n \in \calO_K$.  In fact this is true, and the maximum value of this ratio
is called the {\em elasticity} $\rho_K$ of  $K$.  More generally, the {\em Davenport constant} $D(\Cl_K)$ of $\Cl_K$ is defined to be
the maximal $m$ such that there is a product of length $m$ which is trivial in $\Cl_K$ but no proper subproduct is.  Then the above theorem
can be used to provide a new proof the known result  (e.g., see \cite{Nark}) that $\rho_K = D(\Cl_K)/2$.

\medskip
Specializing to certain cases, we can obtain simple formulas for $\eta_K(n)$ or criteria on when $\eta_K(n) = 1$.  A few examples were
given in the case of class number $2$ in the previous section.  Here we give two more simple examples for arbitrary class number.

\begin{cor} Let $K$ be a quadratic field and $p \in \Z$ a rational prime.  Let $\p$ be a prime of $\calO_K$ above
$p$, and let $m$ be the order of $\p$ in $\Cl_K$.  If $m=1$ or $p$ is ramified in $K/\Q$ then $\eta_K(p^n) = 1$ for all $n \in \N$.
Otherwise, $\eta_K(p^n) = \lfloor \frac nm \rfloor + 1$.
\end{cor}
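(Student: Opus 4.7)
The plan is to apply Theorem \ref{thethm} after a case analysis based on the splitting behavior of $p$ in $K/\Q$. First I would dispose of the easy cases where the only factorization of $p^n$ is essentially forced. Note that $(p)$ is always principal, so the class $[\p]$ must have order dividing $e_\p \cdot f_\p$ where these are the ramification index and residue degree, which is at most $2$ for a quadratic field. In particular, if $p$ is inert then $(p)=\p$ is principal so $m=1$, and if $p$ is ramified then $(p)=\p^2$ forces $m\mid 2$, hence $m\in\{1,2\}$. So the hypothesis ``$m=1$ or $p$ ramified'' really collapses to: either every prime above $p$ is principal, or $p=2\cdot\text{(ramified)}$ with $\p$ of order $2$. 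In each of these cases a direct check (using that $p$ itself or $\pi$ itself or $\pi\bar\pi$ is irreducible) shows that $p^n$ has a unique irreducible factorization up to associates, giving $\eta_K(p^n)=1$.

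For the remaining case I would assume $m\geq 2$ and $p$ unramified; by the observations above $p$ must split, so $(p)=\p\bar\p$ with $[\bar\p]=[\p]^{-1}$, also of order $m$. Thus $(p^n)=\p^n\bar\p^n$, and by Theorem \ref{thethm} (in the form of its corollary) $\eta_K(p^n)$ equals the number of partitions of the multiset $\{\p^{(n)},\bar\p^{(n)}\}$ into minimal sub-multisets whose class product is trivial.

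The main step is to enumerate these minimal sub-multisets. A sub-multiset consisting of $a$ copies of $\p$ and $b$ copies of $\bar\p$ has class product $[\p]^{a-b}$, which is trivial iff $m\mid(a-b)$. I would then argue: if $b=0$, minimality forces $a=m$; symmetrically for $a=0$ we get $b=m$; and if $a,b\geq 1$, then the proper sub-multiset $(1,1)$ already has trivial class product (since $m\mid 0$), so by minimality $(a,b)=(1,1)$. Thus the only minimal sub-multisets are $\{\p^{(m)}\}$, $\{\bar\p^{(m)}\}$, and $\{\p,\bar\p\}$.

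Finally I would count partitions. If $x$, $y$, $z$ denote the multiplicities of these three block types, then $mx+z=n$ and $my+z=n$, so $x=y$ and $z=n-mx$, and $x$ can range over $0,1,\ldots,\lfloor n/m\rfloor$. This gives $\eta_K(p^n)=\lfloor n/m\rfloor+1$. The main obstacle, such as it is, is the verification that the list of minimal sub-multisets is exhaustive; everything else is either a short case check or a routine count once the minimal blocks are in hand.
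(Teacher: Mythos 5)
Your proposal is correct and follows essentially the same route as the paper: dispose of the $m=1$ and ramified cases directly, then in the split case identify the minimal trivial-class blocks as $\{\p,\bar\p\}$, $\{\p^{(m)}\}$, $\{\bar\p^{(m)}\}$ and observe that a partition of $\p^n\bar\p^n$ is determined by the number of $\p^m$-blocks, which ranges over $0,\ldots,\lfloor n/m\rfloor$. Your enumeration of the minimal sub-multisets is slightly more explicit than the paper's, but the argument is the same.
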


\begin{proof}  If $m=1$, the statement is obvious.  If $p$ is ramified, then $p\calO_K = \p^2$, and again the result is immediate from our main result.
Otherwise $p\calO_K = \p \bar \p$ where $\bar \p \neq \p$ and $\bar \p$ is the inverse of $\p$ in $\Cl_K$.  Then any irreducible of $\calO_K$
dividing $p$ corresponds to one of the groupings $\p \bar \p$,  $\p^m$ or  $\bar \p^m$.  The number of times $\p^m$ appears in an
irreducible grouping of $\p^n \bar \p^n$ is the same as the number of times $\bar \p^m$ will appear.
Hence the irreducible factorizations of $p^n$ in $\calO_K$ are determined by the number of $\p^m$'s which appear in an
irreducible groupings of $\p^n \bar \p^n$.
\end{proof}

We remark that in \cite{HKark}, Halter-Koch showed for any number field $K$ and $x \in \calO_K$ (or more generally a Krull monoid), 
$\eta_K(x^n) = An^d + O(n^{d-1})$ for some $A \in \Q$ and $d \in \Z$.

\begin{cor} Let $K$ be a number field and $\frak C \in \Cl_K$ be an ideal class of order $m$.  Suppose $n \in \calO_K$ such that
$(n) = \p_1 \p_2 \cdots \p_k$ where the $\p_i$'s are distinct prime ideals in $\frak C$.  Then $\eta_K(n)$ is the number of partitions of
$\{ 1, 2, \ldots, k \}$ into subsets of size $m$, i.e., $\eta_K(n) = \frac{k!}{(m!)^{k/m}(k/m)!}$.
\end{cor}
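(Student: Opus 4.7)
The plan is to apply Theorem~\ref{thethm} directly. Since all $\p_i$ lie in the single class $\frak C$ of order $m$, any subset $S \subseteq \{1,\ldots,k\}$ satisfies $\prod_{i \in S} \frak C_i = \frak C^{|S|}$, which equals $\frak I$ precisely when $m \mid |S|$. Hence the minimal nonempty subsets $S$ with trivial class product are exactly those of cardinality $m$. (Note that the principality of $(n)$ forces $\frak C^k = \frak I$, so $m \mid k$ automatically.) By the theorem, irreducible factorizations of $n$ in $\calO_K$, up to associates, correspond bijectively to partitions of $\{1,\ldots,k\}$ into blocks of size $m$; each block $S$ gives an irreducible factor $\beta_l \sim \prod_{i \in S}\alpha_i$ in a principalization field $L$ for $\frak C$.

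It then remains to carry out the standard combinatorial count: the number of partitions of a $k$-element set into $k/m$ unordered blocks each of size $m$ equals $\frac{k!}{(m!)^{k/m}(k/m)!}$. One sees this by arranging all $k$ elements in a line ($k!$ orderings), cutting into consecutive blocks of size $m$, then dividing by $(m!)^{k/m}$ for the orderings within each block and by $(k/m)!$ for the orderings among the blocks themselves.

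The statement is essentially an immediate specialization of Theorem~\ref{thethm}, so there is no real obstacle; the one conceptual point worth flagging is that the relevant condition is \emph{minimal} triviality in $\Cl_K$, not merely triviality, which is what forces blocks of size exactly $m$ rather than any positive multiple of $m$.
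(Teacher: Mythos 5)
Your proposal is correct and follows exactly the paper's intended route: the paper simply states that this corollary ``is immediate from our main result,'' and your argument spells out that immediacy --- the minimal subsets with trivial class product are precisely those of size $m$, and the count of set partitions into blocks of size $m$ gives $\frac{k!}{(m!)^{k/m}(k/m)!}$. No discrepancy with the paper's approach.
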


This is immediate from our main result, and a generalization of Corollary \ref{cor3}.

\section{Explicit factorizations in quadratic fields} \label{sec4}

As we pointed out earlier, the approach via quadratic forms in Section \ref{sec2} in some sense gives the irreducible factorizations of an element
of $\calO_K$ in a more explicit fashion.  Specifically, one does not know {\em a priori} the elements $\alpha_{ij}$ occurring in Theorem \ref{thethm}
explicitly.  Therefore one might ask in what generality can one apply the prinicipalization argument from Section \ref{sec2} using quadratic forms.
First we must restrict to the case of quadratic fields.

From now on, unless otherwise stated, let $\Delta$ be a fundamental discriminant and $K=\Q(\sqrt{\Delta})$ be the quadratic field of
discriminant $\Delta$.
Suppose $Q(x,y) = ax^2+bxy+cy^2$ is a primitive quadratic form of discriminant $\Delta$.   Then $Q(x,y)$ factors into
linear factors
\begin{equation} \label{qfact}
Q(x,y) = ax^2+bxy+cy^2 = \left( \sqrt a x + \frac{b+\sqrt \Delta}{2\sqrt a} y \right) \left( \sqrt a x + \frac{b-\sqrt \Delta}{2\sqrt a} y \right)
\end{equation}
over $K'=K(\sqrt{a})$.  Clearly $\sqrt a \in \calO_{K'}$.  On the other hand $\beta^{\pm} = \frac{b\pm \sqrt \Delta}{2\sqrt a} \in \calO_{K'}$ if and only if
the norm $N_{K'/K}(\beta^\pm)$ and trace $Tr_{K'/K}(\beta^\pm)$ lie in $\calO_K$.  

Note that $K'=K$ if and only if $a = m^2$ or $a=m^2\Delta$ for some
$m \in \Z$.  The latter is not possible since $Q$ is primitive.  The former implies that $\beta^\pm \in \calO_{K'} = \calO_K$ if and only if $a=1$.  

Suppose $K' \neq K$ and write $\mathrm{Gal}(K'/K) = \{ 1, \sigma \}$.  Then $\sigma(\beta^\pm) = -\beta^\pm$ so 
we always have $Tr_{K'/K}(\beta^\pm) = 0 \in \calO_K$.  On the other hand $N(\beta^\pm) = \frac{b(b\pm \sqrt{\Delta})}{2a}-c$, which lies in 
$\calO_K$ if and only if $b|a$.  If $b|a$, then $Q(x,y)$ is called {\em ambiguous}.  Hence we have shown

\begin{lemma} \label{lemma1}
Let $Q(x,y)=ax^2+bxy+cy^2$ be a primitive form of discriminant $\Delta$.  Then $Q$ factors into integral linear forms in 
$\Q(\sqrt{\Delta}, \sqrt a)$ if and only if $Q$ is ambiguous. 
\end{lemma}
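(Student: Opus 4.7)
The argument is essentially laid out in the discussion preceding the lemma, so my plan is to organize it into a clean proof. Since $\sqrt{a}\in\calO_{K'}$ automatically, the factorization (\ref{qfact}) is integral iff both coefficients $\beta^{\pm} = (b\pm\sqrt{\Delta})/(2\sqrt{a})$ lie in $\calO_{K'}$. I would first dispose of the degenerate case $K' = K$: primitivity of $Q$ rules out $a = m^2\Delta$, so this forces $a = m^2$, and on expressing $\beta^\pm = (b\pm\sqrt{\Delta})/(2m)$ in the integral basis $\{1,(\Delta + \sqrt{\Delta})/2\}$ of $\calO_K$, the $\sqrt{\Delta}$-component immediately forces $m = 1$, i.e.\ $a = 1$. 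Since any form with $a = 1$ is ambiguous (the divisibility condition being trivial), both sides of the stated equivalence hold in this case.

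The main case is $K' \neq K$, where $[K':K] = 2$ and the nontrivial $\sigma\in\mathrm{Gal}(K'/K)$ sends $\sqrt{a}\mapsto -\sqrt{a}$, hence $\beta^{\pm}\mapsto-\beta^{\pm}$. Integrality of $\beta^{\pm}$ over $\calO_K$ therefore reduces to its trace (automatically $0$) and norm $N_{K'/K}(\beta^{\pm})$ lying in $\calO_K$. Substituting $\Delta = b^2 - 4ac$ simplifies the norm, so the question becomes whether $\tfrac{b^2 + b\sqrt{\Delta}}{2a} \in \calO_K$. Expanding this in the basis $\{1, (\Delta+\sqrt{\Delta})/2\}$ produces two integrality conditions: the $\sqrt{\Delta}$-coefficient yields exactly the ambiguity divisibility condition relating $a$ and $b$, while the rational part yields a secondary condition.

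The main obstacle is showing that, given ambiguity, the secondary condition holds automatically. Writing $b = ak$ reduces it to $k(ak - \Delta)/2 \in \Z$, and a short case split on parities, using $\Delta \equiv b^2 \pmod 4$ (so that $b$ and $\Delta$ have the same parity), handles the odd- and even-discriminant cases uniformly: when $\Delta$ is odd, $a$ and $k$ are both odd and $ak - \Delta$ is even; when $\Delta$ is even, $ak$ is even, hence so is $k(ak - \Delta)$. Once this parity bookkeeping is done, both directions of the equivalence in the lemma are immediate.
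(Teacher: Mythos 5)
Your proof is correct and follows the same route as the paper's (whose ``proof'' is just the discussion preceding the lemma): factor $Q$ over $K'=K(\sqrt a)$, dispose of the degenerate case $K'=K$ via primitivity, and reduce integrality of $\beta^{\pm}=\frac{b\pm\sqrt{\Delta}}{2\sqrt a}$ to the trace (zero) and norm conditions, with the $\sqrt{\Delta}$-component of the norm giving exactly $a\mid b$. The one place you go beyond the paper is in verifying that the rational part of the norm is automatically integral once $a\mid b$ (your parity argument with $k(ak-\Delta)/2$ using $\Delta\equiv b^2 \bmod 4$); the paper simply asserts that the norm lies in $\calO_K$ if and only if the form is ambiguous, so your write-up is in fact slightly more complete.
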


In other words, we can use the factorization of a quadratic form to principalize the corresponding ideal class if and only if the quadratic form
is ambiguous.  This makes sense because an ideal class corresponds to an ambiguous form if and only if it has order $\leq 2$ in the class group.
On the other hand, the linear factorization of a binary quadratic form always happens over a quadratic extension, but one needs to use
an extension of degree $m$ to principalize an ideal class of order $m$ in $\Cl_K$.  

To see this last assertion, suppose $\frak a$ is an ideal of order $m$ 
in $\Cl_K$, so that $\frak a^m = (\alpha)$.  If $L$ principalizes $\frak a$, say $\frak a \calO_L = (\beta)$, then $\beta^m \calO_L = \alpha \calO_L$.
Hence $\sqrt[m]{u\alpha} \in \calO_L$ for some unit $u \in \calO_K$.  No $k$-th root of $u\alpha$ is contained in $K$ for $1\neq k | m$ since
$\frak a$ has order $m$.  Therefore $m | [L:K]$.

\medskip
We now set up our notation for the statement and proof of the main result of this section.
Let $\frak I$ be the class of principal ideals in $\Cl_K$, and 
$\frak C_1, \ldots, \frak C_k$ be the ideal classes in $\Cl_K$ of order $2$.   We assume $k \geq 1$.

If $Q(x,y) = ax^2+bxy+cy^2$ is primitive of discriminant $\Delta$,
 we define the ideal in $\calO_K$ corresponding $Q$ to be $(a, \frac{b-\sqrt{\Delta}}2 )$.  
We will say two forms are (weakly) equivalent if their corresponding ideals are equivalent, so that the equivalence classes of forms form a group
isomorphic to $\Cl_K$.  It is easy to see that $Q$ and $-Q$ correspond to the same ideal if $Q$ is ambiguous.

Let $Q_j(x,y) = a_j x^2 + b_jxy + c_jy^2$ be an ambiguous form corresponding to an ideal in $\frak C_j$.  
Set $K_j = K(\sqrt{a_j})$ and $L = K_1 K_2 \cdots K_k$.   

\begin{lemma} \label{lemma3}
   $K_j$ is a principalization field for $\frak C_j$.  Hence
  $L$ is a principalization field for $\frak C_1, \ldots, \frak C_k$.
\end{lemma}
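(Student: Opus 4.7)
The plan is to exhibit an explicit principal generator. I will show that $\frak a_j := (a_j, \frac{b_j - \sqrt{\Delta}}{2})$, the ideal in $\frak C_j$ associated to $Q_j$, satisfies $\frak a_j \calO_{K_j} = (\sqrt{a_j})$. Since every ideal in $\frak C_j$ equals $(\gamma)\frak a_j$ for some $\gamma \in K^\times$, this principalizes the entire class $\frak C_j$. The assertion about $L$ then follows immediately because $L \supseteq K_j$ for each $j$.

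First I would check that $\frak a_j$ is an ambiguous ideal, i.e.\ $\frak a_j = \bar{\frak a_j}$. Since $Q_j$ is ambiguous we have $a_j \mid b_j$; writing $b_j = a_j \ell$ gives
\[
   \frac{b_j + \sqrt{\Delta}}{2} = a_j \ell - \frac{b_j - \sqrt{\Delta}}{2} \in \frak a_j,
\]
so $\bar{\frak a_j} \subseteq \frak a_j$, with equality by symmetry. Next I would compute
\[
   \frak a_j^2 = \frak a_j \bar{\frak a_j} = (a_j)
\]
in $\calO_K$, by expanding the four generators of the product and using primitivity $\gcd(a_j, b_j, c_j) = 1$ together with $\Delta = b_j^2 - 4 a_j c_j$.

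Now pass to $\calO_{K_j}$. Both ideals $\frak a_j \calO_{K_j}$ and $(\sqrt{a_j})$ square to $(a_j) \calO_{K_j}$: the first by the computation above, the second because $(\sqrt{a_j})^2 = (a_j)\calO_{K_j}$ tautologically. Since $\calO_{K_j}$ is a Dedekind domain, comparing exponents in prime factorizations shows that $I^2 = J^2$ forces $I = J$. Hence $\frak a_j \calO_{K_j} = (\sqrt{a_j})$ is principal, and the first assertion is established.

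The only substantive point is the identity $\frak a_j^2 = (a_j)$, which encodes the classical fact that an ambiguous primitive ideal squares to its norm; the remainder is the formal cancellation of ideal squares available in any Dedekind domain, together with the tautological observation that the principalization statement for $L$ reduces to the ones for the individual $K_j$.
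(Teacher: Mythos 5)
Your proof is correct and follows essentially the same route as the paper: the paper also verifies $\bar{\frak a}=\frak a$ and $\frak a^2=(a_j)$ for the ideal $\frak a$ attached to $Q_j$ and then concludes $\frak a\calO_{K_j}=(\sqrt{a_j})$, which principalizes the whole class. You merely supply the details (the use of $a_j\mid b_j$ for ambiguity, and the cancellation of ideal squares in a Dedekind domain) that the paper leaves as ``one easily checks.''
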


\begin{proof}
  Let $\frak a$ be the ideal of $\calO_K$ corresponding to $Q_j$, and $\bar{\frak a}$ be its conjugate.  One easily checks that
  $\bar{\frak a} = \frak a$ and ${\frak a}^2 = (a_j)$.  Thus $\frak a \calO_{K_j} = (\sqrt{a_j})^2$.  Since $\frak a \in \frak C_j$,
  $K_j$ principalizes any ideal in $\frak C_j$.
\end{proof}

Though we do not need this for the proposition below, it would be decent of us to determine the structure of $L/K$.  This
follows from the following.

\begin{lemma} \label{lemma2}
Let $Q(x,y) = ax^2+bxy+cy^2$ and $R(x,y) = dx^2 + exy + fy^2$ be primitive ambiguous forms of discriminant $\Delta$.  
Let $\frak a = (a, \frac{b-\sqrt{\Delta}}2 )$ and $\frak b = (d, \frac{e-\sqrt{\Delta}}2)$
be the ideals of $\calO_K$ corresponding to $Q$ and $R$.
Then $K(\sqrt a) = K(\sqrt d)$ implies $\frak a$ and $\frak b$ are equivalent.
\end{lemma}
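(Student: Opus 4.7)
The plan is to apply Lemma \ref{lemma3} in the common extension $K' := K(\sqrt a) = K(\sqrt d)$, which principalizes $\frak a$ and $\frak b$ as $\frak a \calO_{K'} = (\sqrt a)$ and $\frak b \calO_{K'} = (\sqrt d)$, then to express $\sqrt d$ as a $K$-multiple of $\sqrt a$ inside $\calO_{K'}$, and finally to descend the resulting relation on fractional ideals back to $\calO_K$.

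For the main case $K' \neq K$: since $\sqrt d \in K(\sqrt a)$, I would write $\sqrt d = r + s\sqrt a$ with $r, s \in K$, square, and compare coefficients in the $K$-basis $\{1, \sqrt a\}$ of $K'$ to force $rs = 0$; together with $\sqrt d \notin K$ this gives $r = 0$, so $\sqrt d = \alpha \sqrt a$ for some $\alpha \in K^*$. Substituting into the principalizations yields
\[ \frak b \calO_{K'} = (\sqrt d) = (\alpha)(\sqrt a) = (\alpha) \cdot \frak a \calO_{K'}, \]
i.e., $\frak b \frak a^{-1} \calO_{K'} = (\alpha) \calO_{K'}$. Since $\alpha \in K$, the right-hand side is the extension of the principal $\calO_K$-fractional ideal $(\alpha) \calO_K$; extension of fractional ideals from $\calO_K$ to $\calO_{K'}$ is injective (its inverse is contraction by intersection with $K$), so the equation descends to $\frak b \frak a^{-1} = (\alpha) \calO_K$ in $\calO_K$, giving $\frak a \sim \frak b$.

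It remains to treat the degenerate case $K' = K$, i.e., $\sqrt a \in K$. Since $\sqrt a$ satisfies the monic integer polynomial $x^2 - a$, we have $\sqrt a \in \calO_K$, so $(\sqrt a)$ is already a principal ideal of $\calO_K$. Then $\frak a^2 = (a) = (\sqrt a)^2$, and unique factorization of ideals in the Dedekind domain $\calO_K$ forces $\frak a = (\sqrt a)$; the same reasoning shows $\frak b$ is principal, so trivially $\frak a \sim \frak b$. The substantive step is the descent in the main case; once Lemma \ref{lemma3} is in hand, the rest is formal.
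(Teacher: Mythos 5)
Your proof is correct, but it takes a genuinely different route from the paper's. The paper argues directly with the form coefficients: it shows $a$ and $d$ are squarefree divisors of $\Delta$, deduces that $K(\sqrt a)=K(\sqrt d)$ forces $d=a$ or $d=\Delta'/a$ (where $\Delta'=\Delta/4$ or $\Delta$), proves in the first case that $\frak a$ and $\frak b$ are literally \emph{equal} (via $r\equiv s \bmod 4\nmid a$ congruences for $b=ra$, $e=sd$), and eliminates the second case by primitivity after replacing $R(x,y)$ by $R(y,-x)$. You instead use the principalizations $\frak a\calO_{K'}=(\sqrt a)$ and $\frak b\calO_{K'}=(\sqrt d)$ --- note these explicit generators come from the \emph{proof} of Lemma \ref{lemma3} rather than its statement, though they are immediate from $\frak a^2=(a)$ and unique factorization of ideals in $\calO_{K'}$ --- observe that $\sqrt d/\sqrt a\in K^*$ once the two quadratic extensions coincide nontrivially, and descend by injectivity of ideal extension; your treatment of the degenerate case $K'=K$ is also fine. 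Your argument is shorter and more conceptual, and the descent step is sound. What the paper's computation buys in exchange is the explicit classification of which squarefree divisors of $\Delta$ can occur as leading coefficients of inequivalent ambiguous forms; that extra information (not just the equivalence statement) is precisely what is invoked in the subsequent corollary that $(\Z/2\Z)^r\subseteq\Cl_K$ forces $\Delta$ to have at least $r+1$ distinct prime factors, where the paper cites ``the above lemma and its proof.'' So your proof establishes the lemma as stated but would not by itself support that later application.
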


\begin{proof} Write $b=ra$ and $e=sd$.  Note that $b^2-4ac = \Delta$ then implies $a | \Delta$.  Since $\Delta$ is either squarefree or $4$
times a squarefree number, we have that $a$ is either squarefree, $2$ times a squarefree number or $4$ times a squarefree number.  On the
other hand, if $4 | a$, then $16 | b^2-4ac = \Delta$, which is not possible.  Hence $a$ is squarefree.  Similarly $d$ is a squarefree divisor of 
$\Delta$.

For any squarefree $n | \Delta$ and $m \in \Z$, we have $\sqrt m \in K(\sqrt n)$ if and only if $m = a, \frac{\Delta}{a}$,  $\Delta,$
$\frac{ \Delta}{4a}$ or $\frac \Delta 4$.  Set $\Delta' = \frac \Delta 4$ if $4 | \Delta$ and $\Delta' = \Delta$ otherwise.
Thus $K(\sqrt a) = K(\sqrt d)$ if and only if $d=a$ or $d=\frac{\Delta'}a$.

First suppose $d=a$.  
Note that dividing $r^2a^2-4ac = s^2a^2-4af$ by $a$ implies $r^2a \equiv s^2a \mod 4$, which implies $r \equiv s \mod 2$ since $4 \nmid a$.
But this means the ideals $\frak a = (a, \frac{ra-\sqrt{\Delta}}2 )$ and $\frak b = (a, \frac{sa-\sqrt{\Delta}}2 )$ are in fact equal.

If $d = \frac{\Delta'}a$, we may replace $R(x,y)$ with the equivalent form $R(y,-x)$, thus interchanging $d$ and $f$, and negating $e$.  
This means both $e$ and $f$ are now divisible by $\frac{\Delta'} a$, so $d$ cannot be by primitivity.  This means $d$ must be $\pm a$, which
we have just dealt with.  (If $d=-a$, we can replace $R$ by $-R$, which corresponds to the same ideal.) 
\end{proof}

We remark that this lemma gives the following well known result.

\begin{cor}  If $\Cl_K$ contains a subgroup isomorphic to $(\Z/2\Z)^r$, then $\Delta$
has at least $r+1$ distinct prime divisors.
\end{cor}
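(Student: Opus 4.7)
The plan is to use Lemma \ref{lemma2} as an injectivity statement: send each class of order dividing $2$ in $\Cl_K$ to the quadratic extension $K(\sqrt{a})$ determined by any primitive ambiguous form representing it, and then count how many such fields can arise.

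First, I would invoke the classical correspondence implicit in the setup preceding Lemma \ref{lemma3}: every class in the $2$-torsion subgroup of $\Cl_K$ is represented by some primitive ambiguous form $ax^2+bxy+cy^2$, and by the opening computation in the proof of Lemma \ref{lemma2} the leading coefficient $a$ is a squarefree divisor of $\Delta$. Lemma \ref{lemma2} then gives a well-defined injection $\frak C \mapsto K(\sqrt{a_{\frak C}})$ from the $2$-torsion subgroup of $\Cl_K$ into the set $\mathcal{F}$ of fields of the form $K(\sqrt{a})$ with $a$ a (positive) squarefree divisor of $\Delta$.

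Next, I would count $\mathcal{F}$. Set $\Delta' = \Delta/4$ if $4 \mid \Delta$ and $\Delta' = \Delta$ otherwise; because $\Delta$ is fundamental, $\Delta'$ is squarefree and $> 1$. From the description in the proof of Lemma \ref{lemma2} of the integers $m$ with $\sqrt{m} \in K(\sqrt{n})$, one has $K(\sqrt{a}) = K(\sqrt{d})$ for positive squarefree $a, d \mid \Delta'$ exactly when $d = a$ or $d = \Delta'/a$. Since $\Delta' > 1$ is squarefree, the involution $a \leftrightarrow \Delta'/a$ on positive squarefree divisors of $\Delta'$ has no fixed points, so $|\mathcal{F}| = 2^{\omega(\Delta')-1}$, where $\omega(n)$ denotes the number of distinct prime divisors of $n$.

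Finally, an embedding $(\Z/2\Z)^r \hookrightarrow \Cl_K$ yields $2^r \leq |\mathcal{F}| = 2^{\omega(\Delta')-1}$, so $\omega(\Delta') \geq r+1$, and hence $\omega(\Delta) \geq r+1$ since every prime dividing $\Delta'$ divides $\Delta$. The one ingredient requiring justification is the first step, namely that each $2$-torsion class actually has an ambiguous primitive representative (this is Gauss's theorem on ambiguous classes: the ideal $\frak a$ satisfies $\frak a = \bar{\frak a}$, which forces an ambiguous form); everything else is bookkeeping with the explicit parametrization supplied by Lemma \ref{lemma2}.
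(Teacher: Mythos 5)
Your overall strategy --- use Lemma \ref{lemma2} to see that pairwise inequivalent ambiguous forms give pairwise distinct fields $K(\sqrt a)$, then bound the number of such fields by counting squarefree divisors --- is the same as the paper's, but your count of $|\mathcal F|$ fails precisely in the case that produces the ``$+1$'', namely imaginary quadratic fields. You assert that ``because $\Delta$ is fundamental, $\Delta'$ is squarefree and $>1$''; for $\Delta<0$ (which covers all of the paper's worked examples) one has $\Delta'<0$, the map $a\mapsto \Delta'/a$ does not send positive divisors to positive divisors, and in fact $K(\sqrt a)=K(\sqrt d)$ for \emph{positive} squarefree $a\neq d$ never happens when $\Delta<0$: it would force $ad$ to lie in $(\Q^\times)^2\cup\Delta(\Q^\times)^2$, and $ad>0$ rules out both options unless $a=d$. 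So for $\Delta<0$ your set $\mathcal F$ has $2^{\omega(\Delta)}$ elements, not $2^{\omega(\Delta)-1}$, and your inequality only yields $\omega(\Delta)\ge r$. (There is also a smaller bookkeeping slip for $\Delta>0$: leading coefficients of ambiguous forms need only be squarefree divisors of $\Delta$, not of $\Delta'$ --- e.g.\ $2x^2+2xy-y^2$ has discriminant $12$ with $\Delta'=3$ --- so the relevant involution is $a\mapsto$ the squarefree part of $a\Delta$, acting on squarefree divisors of $\Delta$.)

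The missing ingredient is exactly what the paper's proof uses: to each of the $2^r$ inequivalent forms one must attach \emph{two} squarefree divisors, $a_i$ and the squarefree part of $\Delta/a_i$ (equivalently, the two quadratic subfields $\Q(\sqrt{a_i})$ and $\Q(\sqrt{a_i\Delta})$ of the biquadratic field $K(\sqrt{a_i})$), and show that all $2^{r+1}$ of these are distinct positive squarefree divisors. The new assertion this requires --- that two ambiguous forms whose leading coefficients $a,d$ satisfy $ad=|\Delta'|$ up to square factors are necessarily equivalent, so that such a pair cannot occur among your $2^r$ inequivalent representatives --- does not follow from the statement of Lemma \ref{lemma2}: for $\Delta<0$ and $a,d>0$ the hypothesis $K(\sqrt a)=K(\sqrt d)$ of that lemma is simply never satisfied when $a\ne d$. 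It needs a separate argument, for instance that the corresponding ideals satisfy $\frak a^2=(a)$ and $\frak b^2=(d)$, hence $(\frak a\frak b)^2=(ad)=(\Delta')=(\sqrt{\Delta'})^2$ and so $\frak a\frak b=(\sqrt{\Delta'})$ is principal by uniqueness of square roots of ideals, forcing $\frak a\sim\frak b$. With that supplement (which is what the paper gestures at by invoking ``the above lemma and its proof''), one gets $2^{r+1}$ distinct positive squarefree divisors of $\Delta$ and hence $\omega(\Delta)\ge r+1$. A secondary point: your map $\frak C\mapsto K(\sqrt{a_{\frak C}})$ is not actually well defined --- for $\Delta=-20$ the equivalent forms $2x^2+2xy+3y^2$ and $10x^2+10xy+3y^2$ give $K(\sqrt2)\neq K(\sqrt{10})$ --- but Lemma \ref{lemma2} does guarantee that a fixed system of pairwise inequivalent forms yields pairwise distinct fields, which is all the counting requires.
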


\begin{proof} Since there must be at least $2^r$ pairwise equivalent ambiguous forms $a_i x^2+b_ixy + c_i y^2$ of discriminant $\Delta$,
the above lemma and its proof imply that the $a_i$'s and $\frac \Delta{a_i}$'s are distinct divisors of $\Delta$.  Each $a_i$ is always squarefree,
and if $\frac \Delta{a_i}$ is not squarefree, then $\frac \Delta{4a_i}$ is, and it is distinct from the other divisors.  Thus $\Delta$ has at least
$2^{r+1}$ distinct squarefree divisors, so it must have at least $r+1$ distinct prime factors.
\end{proof}

One could refine this had we been using the notion of proper equivalence classes of quadratic forms, which we do not need for our purpose.
Precisely, if $r$ is maximal so that $\Cl_K$ contains a subgroup isomorphic to $(\Z/2\Z)^r$, then one can show that there are 
either $r+1$ or $r+2$ distinct prime divisors of $\Delta$.  The first case occurs when the extended genus field of $K$ equals the genus field of $K$, 
and the second when they are different.  (See, e.g., \cite{Janusz}).

However, our interest in the previous lemma is in the structure of $L/K$ (which is closely related to the genus field and extended 
genus field of $K$, but different from both in general).  We know $\{ \frak I, \frak C_1, \ldots, \frak C_k \}$ is the subgroup of 
$\Cl_K$ generated by all elements of order $2$.  We put $r$ such that $2^r=k+1$ so that this subgroup is isomorphic to 
$(\Z/2\Z)^r$.

\begin{cor} $L$ is an abelian extension of $K$ of degree $2^r$ and {\em Gal}$(L/K) \simeq (\Z/2\Z)^r$.
\end{cor}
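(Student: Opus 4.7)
The assertion has two parts: that $\mathrm{Gal}(L/K) \simeq (\Z/2\Z)^r$ and that $[L:K] = 2^r$. The first is automatic once the second is in hand, since $L$ is a compositum of quadratic extensions of $K$, so $\mathrm{Gal}(L/K)$ embeds into $\prod_{j=1}^k \mathrm{Gal}(K_j/K) \simeq (\Z/2\Z)^k$ and is therefore an elementary abelian $2$-group. Hence my plan is to focus on computing the degree.

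By Kummer theory, $[L:K]$ equals the order of the subgroup $A \leq K^\times/(K^\times)^2$ generated by the classes of $a_1, \ldots, a_k$, so the task reduces to showing $|A| = 2^r$. The lower bound $|A| \geq 2^r$ follows immediately from Lemma \ref{lemma2}: since the $K_j = K(\sqrt{a_j})$ are pairwise distinct quadratic extensions, the $a_j$ represent $k = 2^r - 1$ pairwise distinct nontrivial classes in $K^\times/(K^\times)^2$, and together with the identity they give $2^r$ elements of $A$.

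For the upper bound $|A| \leq 2^r$, I would try to show the map $\phi: \{\frak I, \frak C_1, \ldots, \frak C_k\} \to K^\times/(K^\times)^2$ defined by $\phi(\frak I) = 1$ and $\phi(\frak C_j) = a_j$ is a well-defined group homomorphism. Injectivity is precisely what Lemma \ref{lemma2} provides, so once $\phi$ is a homomorphism its image is a subgroup of $A$ of order $2^r$ containing all generators, forcing $|A| = 2^r$. To check the homomorphism property, suppose $\frak C_i \cdot \frak C_j = \frak C_l$ in $\Cl_K$. Then the ambiguous ideals satisfy $\frak a_i \frak a_j = (\gamma) \frak a_l$ for some $\gamma \in K^\times$, and squaring this relation together with $\frak a_m^2 = (a_m)$ yields $(a_i a_j) = (\gamma^2 a_l)$ as principal ideals, so $a_i a_j = u \gamma^2 a_l$ for some unit $u \in \calO_K^\times$.

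The main obstacle is showing that $u$ must be a square in $K^\times$, which is also what is needed for well-definedness of $\phi$. For $K$ imaginary quadratic with $h_K > 1$, one has $\calO_K^\times = \{\pm 1\}$, and the freedom to replace $Q_j$ by $-Q_j$ lets us take each $a_j > 0$; this forces $u > 0$, hence $u = 1$, and the argument closes. For $K$ real quadratic the unit group is infinite, so positivity of $u$ is not enough and a more delicate norm analysis of $u$ along the fundamental unit is required; a viable fallback is to run $\phi$ in the coarser quotient $K^\times/((K^\times)^2 \calO_K^\times)$, where the homomorphism is easy, and then use the explicit classification from the proof of Lemma \ref{lemma2} — that $\sqrt{m} \in K_j$ only for $m$ in a controlled set of squarefree divisors of $\Delta$ — to pin down $|A| = 2^r$ directly.
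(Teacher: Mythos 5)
Your reduction via Kummer theory to computing the order of the subgroup $A\le K^\times/(K^\times)^2$ generated by the $a_j$ is sound, and your lower bound $|A|\ge 2^r$ from Lemma \ref{lemma2} is exactly the content the paper extracts from that lemma (phrased there as: $L/K$ has at least $2^r-1$ distinct quadratic subextensions). The genuine gap is in your upper bound, at the step where you dispose of the unit $u$ in $a_ia_j=u\gamma^2a_l$. Taking all $a_j>0$ does \emph{not} force $u>0$, because $\gamma^2$ need not be positive for $\gamma\in K^\times$ when $K$ is imaginary quadratic: $\gamma$ may lie in $\sqrt{\Delta}\,\Q$, making $\gamma^2$ a negative rational. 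This is not a repairable technicality within your normalization --- with all $a_j>0$ the conclusion you are driving at can actually fail. In the paper's own Example 2 ($K=\Q(\sqrt{-21})$, $a_1=2$, $a_2=3$, $a_3=14$) one has $\frak a_1\frak a_2=(\gamma)\frak a_3$ with $\gamma=\sqrt{-21}/7$, so $2\cdot 3=-\gamma^2\cdot 14$ and $u=-1$, which is not a square in $K$; hence $6$ and $14$ lie in different classes of $K^\times/(K^\times)^2$, your $\phi$ is not a homomorphism, and indeed $K(\sqrt 2,\sqrt 3,\sqrt{14})\supseteq\Q(i,\sqrt 2,\sqrt 3,\sqrt 7)$ has degree $8=2^{r+1}$ over $K$. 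The equality $[L:K]=2^r$ only holds after the signs of the $a_j$ are chosen compatibly (e.g.\ replacing $Q_3$ by $-Q_3$, so that $a_3=-14$ and $2\cdot 3\cdot(-14)=(2\sqrt{-21})^2$), and your proposal supplies no mechanism for making or exploiting such a choice. Your real-quadratic fallback also does not close the argument, since $K^\times/((K^\times)^2\calO_K^\times)$ does not compute the Kummer degree $[L:K]$.

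For comparison, the paper's proof runs in the opposite direction: it asserts the upper bound $[L:K]\le 2^r$ as ``clear by construction'' and uses Lemma \ref{lemma2} only to produce $2^r-1$ distinct quadratic subextensions, giving the lower bound. Your Kummer-theoretic framing has the merit of exposing that all of the content --- including the sign subtlety above --- sits precisely in the upper bound that the paper treats as obvious; but as written your argument does not establish that bound, in the imaginary case because of the error just described, and in the real case by your own admission.
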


\begin{proof} Clearly $[L:K] \leq 2^r$ and is a power of $2$ by construction.  Moreover $L/K$ is Galois and
the Galois group is an elementary abelian $2$-group because $L$ is obtained from $K$ by adjoining square roots of $K$.
By the previous lemma, $L/K$ has $2^r - 1$ subextensions of
degree $2$ over $K$, so $[L:K]=2^r$.  
\end{proof}

\begin{prop} Let $n = \prod p_i^{d_i} \prod q_{jk}^{e_{jk}} \prod r_\ell^{f_\ell} \in \N$ where the $p_i$'s are primes in $\N$ represented by the principal form $Q_0(x,y)=x^2+b_0xy+c_0y^2$ of discriminant $\Delta$, the $q_{jk}$'s are primes in $\N$
represented by $Q_j$ and the $r_\ell$'s are primes in $\N$ not represented by any form of discriminant $\Delta$.  Write each $p_i = Q_0(u_i,v_i)$
and $q_{jk} = Q_j(x_{jk},y_{jk})$ for $u_i, v_i, x_{jk}, y_{jk} \in \Z$.  Let
\[ \alpha_i^\pm = u_i + \frac{b_0\pm \sqrt \Delta}{2} v_i \]
and
\[ \beta_{jk}^\pm = \sqrt a_j x_{jk} + \frac{b_j\pm \sqrt \Delta}{2\sqrt {a_j}} y_{jk}. \]
Then the irreducible factorizations of $n$ in $\calO_K$, up to units, are precisely given by the $\calO_K$-irreducible groupings of the factorization
\[ n = \prod_i (\alpha^+_i \alpha^-_i)^{d_i} \prod_{jk} (\beta^+_{jk} \beta^-_{jk})^{e_{jk}} \prod r_\ell^{f_\ell} \]
in $\calO_L$.
\end{prop}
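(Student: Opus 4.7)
The plan is to apply Theorem \ref{thethm} with the principalization fields $K_j = K(\sqrt{a_j})$ supplied by Lemma \ref{lemma3} and compositum $L = K_1 \cdots K_k$, identifying the abstract generators $\alpha_{ij}$ in that theorem with the explicit elements $\alpha_i^\pm$, $\beta_{jk}^\pm$, and $r_\ell$ in the statement.

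First I would verify that the displayed factorization actually holds in $\calO_L$. From (\ref{qfact}) applied to $Q_0$ (where $\sqrt{a_0}=1$), a direct computation gives $\alpha_i^+\alpha_i^- = u_i^2 + b_0 u_i v_i + c_0 v_i^2 = Q_0(u_i,v_i) = p_i$; and since $b_0 \equiv \Delta \pmod 2$ (because $b_0^2 - 4c_0 = \Delta$), both $\alpha_i^\pm$ lie in $\calO_K$. Similarly (\ref{qfact}) yields $\beta_{jk}^+\beta_{jk}^- = Q_j(x_{jk},y_{jk}) = q_{jk}$, and Lemma \ref{lemma1} guarantees $\beta_{jk}^\pm \in \calO_{K_j} \subseteq \calO_L$ precisely because $Q_j$ is ambiguous.

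Next I would match each explicit factor to its prime-ideal generator. Each $r_\ell$, being represented by no form of discriminant $\Delta$, is inert in $K/\Q$, so $r_\ell\calO_K$ is already a principal prime. Each $p_i$ is represented by the principal form $Q_0$, so $(p_i) = \p_i\bar{\p}_i$ with $\p_i$ principal (the two factors coinciding in the ramified case); unique factorization of prime ideals combined with $\alpha_i^+\alpha_i^- = p_i$ then forces $(\alpha_i^+)$ and $(\alpha_i^-)$ to equal $\p_i$ and $\bar{\p}_i$ in some order. Each $q_{jk}$ is represented by the ambiguous form $Q_j$, so $q_{jk}\calO_K = \q_{jk}\bar{\q}_{jk}$ with $\q_{jk} \in \frak C_j$; Lemma \ref{lemma3} principalizes these in $K_j$, and unique factorization in $\calO_L$ together with $\beta_{jk}^+\beta_{jk}^- = q_{jk}$ forces $(\beta_{jk}^+)$ and $(\beta_{jk}^-)$ to be $\q_{jk}\calO_L$ and $\bar{\q}_{jk}\calO_L$ in some order.

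With these identifications established, Theorem \ref{thethm} applies verbatim and delivers the conclusion: the irreducible factorizations of $n$ in $\calO_K$ are precisely the $\calO_K$-irreducible groupings of the product $n = \prod_i (\alpha_i^+\alpha_i^-)^{d_i} \prod_{jk}(\beta_{jk}^+\beta_{jk}^-)^{e_{jk}} \prod_\ell r_\ell^{f_\ell}$ in $\calO_L$. The only real bookkeeping issue is the ramified case, where the two members of a pair $\{\alpha_i^+,\alpha_i^-\}$ or $\{\beta_{jk}^+,\beta_{jk}^-\}$ generate the same prime ideal and hence are associate in $\calO_L$; this is harmless, since Theorem \ref{thethm} does not require the generators $\alpha_{ij}$ to be pairwise nonassociate.
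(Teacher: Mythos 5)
Your proposal is correct and follows essentially the same route as the paper: establish that the $r_\ell$ are inert, that each $p_i$ splits into principal primes, and that each $q_{jk}$ splits into primes of class $\frak C_j$, then invoke Theorem \ref{thethm} with the principalization fields from Lemma \ref{lemma3}. The paper's own proof is terser (it omits the explicit verification that $\alpha_i^+\alpha_i^- = p_i$, $\beta_{jk}^+\beta_{jk}^- = q_{jk}$ and the matching of $(\beta_{jk}^\pm)$ with $\q_{jk}\calO_L$, $\bar\q_{jk}\calO_L$, which it carries out only in the $\Q(\sqrt{-5})$ example of Section \ref{sec2}), so your added bookkeeping is a faithful elaboration rather than a different argument.
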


By an $\calO_K$-{\em irreducible grouping} of a product $\prod \gamma$ in $\calO_L$, we of course mean a grouping of the terms such that
the product of each group of terms is (up to a unit of $\calO_L$) an irreducible in $\calO_K$.  In the above proposition, each $\alpha_i^\pm$ and
$r_\ell$ is already an irreducible of $\calO_K$, and the elements $\beta_{jk}^\pm$ correspond to the ideal class $\frak C_j$.  A product of these
$\beta_{jk}^\pm$'s is, up to a unit of $\calO_L$, an irreducible in $\calO_K$ if and only if the corresponding product of ideal classes is trivial but 
no proper subproduct is.  In fact, such a product of $\beta_{jk}^\pm$'s must actually be an irreducible of $\calO_K$, since the fact that
$\beta_{jk}^\pm \in \sqrt{a_j} K$ implies such a product lies in $\calO_K$.

\begin{proof} It is obvious that any prime $p_i$ represented by $Q_0$ satisfies $p_i\calO_K = \p_1 \p_2$ for some principal prime 
ideals $\p_1$ and $\p_2$ of $\calO_K$, since $Q_0$ factors over $K$.  
Further any prime $q_{jk}$ represented by $Q_j$ satisfies $q_{jk} \calO_K = \q_1 \q_2$ for some prime ideals
$\q_1, \q_2 \in \frak C_j$ (see \cite[p. 143]{BS}).  Lastly each $r_\ell$ is inert in $K/\Q$.  Now apply Theorem \ref{thethm}.
\end{proof}

The above gives a complete answer for the factorization of rational integers $n$ in $\calO_K$ when $\Cl_K \simeq (\Z/2\Z)^r$, i.e., when there is
one class per genus in the form class group, and a partial answer for other quadratic fields.  We end with two examples and some remarks
on principalization fields.

\medskip
\noindent
{\bf Example 1.} Let $\Delta = -87$.  Then $K=\Q(\sqrt{-87})$ has class number $h_K = 6$.  The principal form is $Q_0(x,y) = x^2+xy+22y^2$
and there is one other ambiguous form up to equivalence, $Q_1(x,y)=3x^2+3xy+8y^2$.  
Let $n=14145=3 \cdot 5 \cdot 23 \cdot 41$.  We see $3 = Q_1(1,0)$,
${-87 \leg 5} = -1$ so $5$ is not represented by a form of discriminant $\Delta$, $23=Q_0(1,1)$ and $41 = Q_1(1,2)$.  Let 
\[ \alpha^{\pm} = 1  + \frac{1\pm \sqrt {-87}}{2} = \frac{3 \pm \sqrt{-87}}2, \]
\[ \beta_1 = \sqrt 3, \]
\[ \beta_2^\pm = \sqrt 3  + 2 \frac{3\pm \sqrt {-87}}{2\sqrt {3}}  =  2\sqrt{3} \pm \sqrt{-29} \]
Then the irreducible factorizations of $n$ in $\calO_K$ are given by the $\calO_K$-irreducible groupings of the factorization
\[ n= \alpha^+ \alpha^{-} \beta_1^2 \beta_2^+ \beta_2^- \cdot 5 \]
in $\calO_L$, where $L=K(\sqrt{3})$.
Specifically, $\eta_K(n) = 2$ and the factorizations are
\[ (\alpha^+) (\alpha^{-}) (\beta_1^2) (\beta_2^+ \beta_2^-)  5 =  \frac{3 + \sqrt{-87}}2 \cdot \frac{3 - \sqrt{-87}}2 \cdot 3 \cdot 41 \cdot 5, \]
\[ (\alpha^+) (\alpha^{-}) (\beta_1 \beta_2^+) (\beta_1 \beta_2^-)  5 =  \frac{3 + \sqrt{-87}}2 \cdot \frac{3 - \sqrt{-87}}2 
(6 + \sqrt{-87})(6-\sqrt{87}) \cdot 5. \]

\medskip
\noindent
{\bf Example 2.} Let $\Delta = -21$.  Then $K=\Q(\sqrt{-21})$ has class group $\Cl_K \simeq (\Z/2\Z)^2$.  We take for our ambiguous forms
the principal form $Q_0(x,y) = x^2+21y^2$, $Q_1(x,y) = 2x^2+2xy+11y^2$, $Q_2(x,y) = 3x^2+7y^2$ and $Q_3(x,y) = 14x^2+14xy+5y^2$.
(Note that all of these are reduced, except for $Q_3$ which is equivalent to the reduced form $5x^2+4xy+5y^2$.) 
 
A prime  $p \in \N$ is represented by $Q_0$ if $p \equiv 1, 25, 37 \mod 84$, by $Q_1$ if $p=2$ or $p \equiv 11, 23, 71
\mod 84$, by $Q_2$ if $p=3,7$ or $p \equiv 19, 31, 55 \mod 84$, and by $Q_3$ if $p \equiv 5, 17, 41 \mod 84$.

Let $n = 46189 = 11\cdot 13 \cdot 17\cdot 19$, so $2 = Q_1(1,0)$, $11=Q_1(0,1)$, $13$ is not represented by a form of discriminant $\Delta$,
$17 = Q_3(1,-3)$ and $19=Q_2(2,1)$.  Set 
\[ \beta_{1}^\pm = \frac{2\pm 2\sqrt{-21}}{2\sqrt 2} = \frac{1\pm \sqrt{-21}}{\sqrt 2} \]
\[ \beta_2^{\pm} = 2\sqrt{3} \pm {\sqrt{-7}} \]
\[ \beta_3^{\pm} = \sqrt{14} - 3 \cdot \frac{14 \pm 2\sqrt{-21}}{2\sqrt{14}} = \frac{-\sqrt{7} \pm 3\sqrt{-3}}{\sqrt 2}. \]
Then the irreducible factorizations of $n$ in $\calO_K$ are given by the $\calO_K$-irreducible groupings of the factorization
\[ n = \beta_{1}^+ \beta_{1}^- \beta_{2}^+ \beta_2^- \beta_3^+ \beta_3^- \cdot 13\]
in $\calO_L$ where $L = K(\sqrt 2, \sqrt 3, \sqrt{14})$.
Precisely, there are $\eta_K(n) = 5$ of them and they are
$ 13 (\beta_{1}^+ \beta_{1}^-)(\beta_{2}^+ \beta_2^-)( \beta_3^+ \beta_3^-) = 13 \cdot 11 \cdot 19 \cdot 17$,
$ 13 (\beta_{1}^+ \beta_2^+ \beta_3^+)(\beta_1^- \beta_2^- \beta_3^-)$, 
$ 13 (\beta_{1}^+ \beta_2^+ \beta_3^-)(\beta_1^- \beta_2^- \beta_3^+)$,
$13 (\beta_{1}^+ \beta_2^- \beta_3^+)(\beta_1^- \beta_2^+ \beta_3^-)$, and
$13 (\beta_{1}^+ \beta_2^- \beta_3^-)(\beta_1^- \beta_2^+ \beta_3^+)$.

\medskip
\noindent
{\bf Final remarks.}
In the case $K$ is a quadratic field with class group $\Cl_K \simeq (\Z/2\Z)^r$, we have constructed a principalization field $L$ which
is Galois over $K$ and $\mathrm{Gal}(L/K) \simeq \Cl_K$.  Further, $L$ is unramified outside of any primes dividing $2\Delta$.  In fact, by using
 $K_j=K(\sqrt{-a_j})$ instead of $K(\sqrt{a_j})$ when $a_j \equiv 3 \mod 4$, we can ensure $L=\prod K_j$ is unramified outside of any (finite) primes dividing 
 $\Delta$.   Moreover, this is not equal to the Hilbert class field $H$ of $K$ in general, as our earlier example of $K=\Q(\sqrt{5})$ shows.
 (It is of course closely related to $H$, and more generally to the genus field of $K$.)

In general for a number field $K$ it is natural to ask, what we can say about the minimal abelian extensions $L$ which principalize $K$?
By the remarks after Lemma \ref{lemma1}, we know $m | [L:K]$ for every cyclic group of order $m$ contained in $\Cl_K$.
One might be tempted to posit that $[L:K] \geq h_K$, or even that $\mathrm{Gal}(L/K)$ contains $\Cl_K$, but this turns out to be false.  For instance, the Hilbert class field $H$ of $K$ is an
abelian extension of $K$ with $\mathrm{Gal}(L/K) \simeq \Cl_K$ and always 
principalizes $K$, but proper subextensions of $H$ may also principalize $K$ (\cite{HS}, \cite{Iwasawa1}, \cite
{Iwasawa2}).  We will not survey the literature on principalization, but refer to the expositions \cite{Jaulent}, \cite{Miyake} and \cite{Suzuki},
as well as point out the recent works \cite{Gras} and \cite{Bosca} which study extensions of $K$ not contained in its Hilbert class field.

\end{document}